\newcommand{\Z}{\mathbb{Z}}
\newcommand{\R}{\mathbb{R}}
\def\dash{\discretionary{-}{}{-}\penalty1000\hskip0pt}
\newcommand{\spann}{{\rm span}}
\newcommand{\wu}{{\sf WU}}
\newcommand{\crr}{{\rm cr}}
\theoremstyle{definition}
\newtheorem{theorem}{Theorem}
\newtheorem{definition}[theorem]{Definition}
\newtheorem{proposition}[theorem]{Proposition}
\newtheorem{remark}[theorem]{Remark}
\newtheorem{corollary}[theorem]{Corollary}
\newtheorem{example}[theorem]{Example}
\def\@captionfont{\normalfont\footnotesize}\makeatother
\begin{document}

\title[Realizability of tensegrities in higher dimensions]{Geometric criteria for realizability of tensegrities in higher dimensions}

\author{Oleg Karpenkov \and Christian M\"uller}

\address{Oleg Karpenkov\\
University of Liverpool
}
\email{karpenk@liv.ac.uk}

\address{Christian M\"uller\\
TU Wien
}
\email{cmueller@geometrie.tuwien.ac.at}

\keywords{Tensegrity, self-stressed equilibrium framework, join/intersection operations, projective geometry, multiplicative $1$-forms}

%\begin{AMS}
  %05C10, 52C25, 53A70
%\end{AMS}

\maketitle

% REQUIRED
\begin{abstract}
  In this paper we study a classical Maxwell question on the existence of
  self-stresses for frameworks, which are called tensegrities.  We give a
  complete answer on geometric conditions of at most $(d+1)$-valent
  tensegrities in $d$-dimensional space both in terms of discrete
  multiplicative 1-forms and in terms of ``join'' and ``intersection''
  operations in projective geometry.
\end{abstract}

\section{Introduction}

In this paper we are dealing with a classical question on self-stresses of frameworks in arbitrary dimensions (that were later referred as tensegrities).
Our main goal is to find geometric tensegrity existence characterizations
on all (generic) $k$-valent graphs in $\R^d$ ($k \leq d + 1$).
We do this in two different geometric settings.
The first one is based on discrete multiplicative $1$-forms which belong
to discrete differential geometry. For example, discrete multiplicative
$1$-forms have been used to characterize discrete Koenigs
nets~\cite{bobenko+2008}.
The second one is via geometric relations written in terms of
join/intersection operations in projective geometry.

\emph{Tensegrities} (Definition~\ref{defR}) are frameworks in equilibrium
consisting of rods, cables, and struts linked to each other at vertices $p_i$.
Forces in the framework (represented by vectors $w_{i, j} (p_i - p_j)$)
acting along cables are pulling $w_{i, j} < 0$ whereas forces acting along
rods are pushing $w_{i, j} > 0$; for the struts both force action is possible. The equilibrium condition means that all
forces around each vertex sum up to zero:
$$
  \sum\nolimits_{\{j|j\ne i\}} w_{i,j}(p_i-p_j)=0,
  \quad\text{for all}\
  i.
$$

Although the research on tensegrities was initiated already in 1864 by J.C.~Maxwell~\cite{Maxwell1864},
the term ``tensegrity'' itself appears much later.
Tensegrity is a concatenation of the words ``tension'' and ``integrity''.
This term was proposed by R.~Buckminster Fuller who
was inspired by the elegance of self-stressed constructions.
Tensegrities form an essential part of modern architecture and in arts, they serve as a light structural support
(like in a recent sculpture {\it TensegriTree} in the University of Kent).
Tensegrities are traditionally used in the study of cells~\cite{Ingber2014,Aloui2018},
viruses~\cite{Caspar2013,Simona-Mariana2011}, deployable mechanisms~\cite{Skelton1997}, etc.

In the second half of the 20th century the subject of tensegrities
became popular in mathematics again: questions of rigidity and flexibility
of structures were studied amongst others by R.~Connelly, B.~Roth, and W.~Whiteley in~\cite{Connelly2013,Connelly1996,Roth1981,Whiteley1997}, etc.
For a general modern overview of the subject we refer to the book~\cite{Sitharam2019}.

Tensegrities were generalized to spherical and projective geometries (by F.V.~Saliola and W.~Whiteley~\cite{Saliola2007});
to normed spaces (by D.~Kitson and S.C.~Power in~\cite{Kitson2014}
and by D.~Kitson and B.~Schulze in~\cite{Kitson2015});
and to surfaces in $\R^3$ (by B.~Jackson and A.~Nixon in~\cite{Jackson2015a}); etc.
\\

\noindent
{\bf Realizability of tensegrities.}
If the amount of edges is not large enough, a generic realization of a graph in $\R^d$ will not have a non-zero tensegrity.
The non-zero tensegrities exist only for specific frameworks (that are actually semi-algebraic sets in the configuration spaces of
tensegrities~\cite{Doray2010}).
For instance, a framework for the $K_{3,3}$ graph admits a non-zero tensegrity if and only if all its six points are on a conic.

An algebraic description of realizability conditions for tensegrities was proposed by N.L.~White and W.~Whiteley in~\cite{White1983,White1987}.
It was given in terms of bracket rings for the determinants of extended rigidity matrices (see also~\cite{White1975}).
This original algebraic approach introduces large polynomial
conditions using all vertices of the whole graph (plus several additional
vertices taken arbitrarily, they are called {\it tie-downs}) which can become rather complex and tricky to observe and to
analyze. Factorization of these polynomials is a hard problem that remains open since the seminal papers of N.L.~White and W.~Whiteley~\cite{White1983,White1987}.
This problem is of high importance in the area as the factors correspond to self-stressed frameworks
sharing certain geometric properties (like certain vertices in a line or certain planes intersect, etc).
The hope is that the factorization of these polynomials can be deduced from their rigidity nature (that was confirmed by several examples).

Our geometric approach is designed to complement this
algebraic approach.  It is more localised, here we write single conditions for
cycles in the graph which delivers explicit conditions on the
arrangements of affine spaces associated to frameworks.
Often the last provides factors of the bracket expression.
Here we would like to note that the amount of ``cycle'' conditions to be
considered simultaneously is suggested by the combinatorial theory of
tensegrities studied by R.~Connelly, B.~Roth, W.~Whiteley etc.\ (e.g., for
Laman graphs in the two-dimensional plane all conditions should be
one\dash dimensional).
A good reference for the combinatorial theory is the book~\cite{Sitharam2019}.

In their work M.~de~Guzm\'an and D.~Orden~\cite{Guzman2006} made first steps in the study of geometry of stresses by introducing atom decomposition techniques.
In all the studied examples (see, e.g.,~\cite{Doray2010,White1987})
there is  a simple geometric description for tensegrities
in terms of the ``meet-join'' operations of Cayley algebra.
This suggests such a description for all possible graphs.
In this paper we develop techniques to write such conditions for
the case of $k$-valent graphs ($k \leq d + 1$) in an arbitrary dimension $d$.

A preliminary investigation of geometric conditions was made in~\cite{Doray2010}: the authors had introduced two surgeries that result in classification of all the geometric conditions for codimension one strata for graphs with
8 or less vertices.
Topological properties of the configuration spaces of all tensegrities for graphs with 4 and 5 vertices were studied in~\cite{Karpenkov2013}.
A complete description of geometric conditions in the two-dimensional case was announced in~\cite{Karpenkov2019}.
Finally, a nice collection of problems on geometry and topology of
stratification of tensegrities can be found in~\cite{Karpenkov2018}.

In the present paper we consider less than $d{+}1$ valent graphs in $\R^d$.
We write geometric conditions both in terms of integrability of
multiplicative 1-forms (Theorem~\ref{MainTheorem1}) and in terms of
join/intersection operations in projective geometry
(Theorem~\ref{GC-conditions}).

\noindent
{\bf Organization of the paper.}
We start in Section~\ref{sec:notions} with the definition of
tensegrities and notions that we use throughout the paper.
In Section~\ref{sec:ratios} we discuss discrete multiplicative $1$-forms
and how exact $1$-forms characterize frameworks admitting non-zero
self-stresses.
In Section~\ref{sec:gca} we work within join/intersection operations
in projective geometry to provide a recursive geometric characterization
of tensegrities.
Section~\ref{sec:applications} is devoted to point out a relation between
tensegrities and harmonic maps.
Finally in Section~\ref{sec:octahedron}
we study examples of tensegrities in $\R^3$.

\section{Notions and definitions}
\label{sec:notions}

In this section we give the necessary definitions of the setting around
tensegrities. Additionally, we provide the notion of general position of
the framework so that we can formulate our geometric conditions on
frameworks admitting a tensegrity.

\subsection{Definition of tensegrities}

Let us first set the scene by recalling some basic notions before we come
to the general definition of tensegrities.

\begin{definition}\label{defR}
  Let $G$ be an arbitrary graph without loops and multiple edges on $n$
  vertices.

--- Let $V(G)=\{v_1, \ldots, v_n\}$ and $E(G)$ denote the sets of vertices and edges for $G$, respectively.
Denote by $(v_i;v_j)$ the edge joining $v_i$ and $v_j$.

  --- Let $B(G)$ be the subset of all 1-valent vertices in $V(G)$, which we refer to as the \emph{boundary of $G$}.

--- Let $Z(G)$ be the subset of all vertices with valence greater than 1 in $V(G)$.

\begin{itemize}
\item A \emph{framework} $G(P)$ is a map of the vertices $v_1, \ldots, v_n$ of $G$
onto a finite point configuration $P=(p_1,\ldots,p_n)$ in $\R^d$,
such that $G(P)(v_i)=p_i$ for $i=1,\ldots,n$.
We say that there is an \emph{edge} between $p_i$ and $p_j$ if $(v_i;v_j)$ is an edge of $G$
and denote it by $(p_i;p_j)$.
Note that the points $p_1,\ldots, p_n$ are not necessarily distinct.

\item A \emph{stress} $w$ on a framework is an assignment of real scalars $w_{i,j}$
(called \emph{tensions}) to its edges $(v_i;v_j)$ with the property $w_{i,j} = w_{j,i}$.
We also set $w_{i,j}=0$ if there is no edge between the corresponding vertices.

\item A stress $w$ is called a \emph{self-stress} if the
following equilibrium condition is fulfilled at every vertex of valence greater than 1, i.e., for all $v_i \in Z(G)$:
$$
\sum\limits_{\{j|j\ne i\}} w_{i,j}(p_i-p_j)=0.
$$
By $p_i-p_j$ we denote the vector from the point $p_j$ to the
point $p_i$.
    Note that we do not consider equilibrium for the boundary points $B(G)$.
These are the points where the framework is attached to the exterior construction.
Therefore, the corresponding forces are compensated by the forces of the exterior construction.

\item A pair $(G(P),w)$ is called a \emph{tensegrity} if
$w$ is a self-stress  for the framework $G(P)$.

\item A tensegrity $(G(P),w)$ (or stress $w$) is said to be \emph{non-zero}
if there exists an edge $(v_i;v_j)$ of the framework that
has non-vanishing tension $w_{i,j}\ne 0$.

\item A tensegrity $(G(P),w)$ (or stress $w$) is said to be
  \emph{everywhere non-zero} if each existing edge $(v_i;v_j)$ of the
    framework has non-vanishing tension $w_{i,j}\ne 0$.
\end{itemize}
\end{definition}

\begin{remark}
If the set of boundary points $B(G)$ is empty, we have the classical case
  of tensegrities without boundary.
\end{remark}

\subsection{Frameworks in various general positions}

To formulate our geometric conditions on frameworks admitting a non-zero
self-stress via discrete multiplicative $1$-forms, we need the vertices to
lie in general position (Sec.~\ref{sec:ratios}).
A slightly stronger version of generality is needed to formulate our
conditions within the setting of join/intersection operations in projective
geometry (Sec.~\ref{sec:gca}).

As a general notion throughout the paper, by $\spann(s_1,\ldots, s_k)$ we
denote the {\it affine} or {\it projective span}  of affine/projective
spaces $s_1,\ldots, s_k$ and not the linear span as a vector space.
By \emph{$2$-plane} we denote a two-dimensional affine/projective subspace
and by $k$-plane a $k$-dimensional affine/projective subspace.

\begin{definition}
  \label{defn:lingen}
  A framework $G(P)$ is \emph{linearly generic} if for every vertex (whose
  degree or valence we denote by $k$) the following two conditions hold:
\begin{itemize}
\item the $k$ edges emanating from this vertex span a $(k-1)$-plane;
\item every subset of $k-1$ edges emanating from this vertex spans this
  $(k-1)$-plane.
\end{itemize}
\end{definition}

\begin{remark}
The valences of a linearly generic framework in $\R^d$ do not exceed $d+1$.
\end{remark}

For the geometric characterization of tensegrities in terms of discrete
multiplicative $1$-forms (Section~\ref{sec:ratios}), the property on
frameworks of being linearly generic is all we need. As for our
characterization as formulated within join/intersection operations in
projective geometry (Section~\ref{sec:gca}), we have to include one
further notion of general position.

\begin{definition}
  \label{def:genpos}
  A framework $G(P) \subset \R^d$ is \emph{in 3D-general position} if the following two
  conditions hold:
\begin{itemize}
\item $G(P)$ is linearly generic, and
\item every 4-tuple of vertices in every cycle of $G(P)$ spans a $3$-plane.
\end{itemize}
\end{definition}

Note that the second condition in the previous definition does not
imply that the framework must lie in $\R^3$. Just every 4-tuple of
vertices in a cycle span a $3$-plane.

\section{Characterizing $k$-valent tensegrities in terms of ratios}
\label{sec:ratios}

In this section we give a geometric characterization for linearly generic
at most $k$-valent graphs in $\R^d$ ($k \leq d + 1$) admitting a non-zero
self-stress.
It turns out to be practical to first provide a characterization for
trivalent graphs before then generalizing it to $k$-valent graphs.
Throughout this section all graphs $G$ are connected.
Our goal is to show that the product of certain ratios is $1$ if and only
if the framework admits a non-zero tensegrity
(Theorem~\ref{MainTheorem1}).

\subsection{Tensegrities over trivalent graphs}
\label{subsec:trivalent}

Our geometric characterization of a linearly generic framework to be a
non-zero tensegrity is defined on the cycles of the underlying graph.
The important notion here is the one of a discrete multiplicative $1$-form
which is well known in discrete differential geometry. We follow the
definition in~\cite{bobenko+2008}.
\begin{definition}
  A real valued function $q : \vec E(G) \to \R\setminus\{0\}$
  ($\vec E(G)$ denotes the set of oriented edges of the graph $G$) is
  called a \emph{multiplicative $1$-form}, if $q(-e) = 1/q(e)$ for every
  $e \in \vec E(G)$.  It is called \emph{exact} if for every cycle $e_1,
  \ldots, e_k$ of directed edges the values of the $1$-form multiply to
  $1$, i.e.,
  $$
  q(e_1) \cdot \ldots \cdot q(e_k) = 1.
  $$
\end{definition}

The following definition is about a particular subdivision of a graph.

\begin{definition}
  The \emph{line graph} $L(G)$ of a general graph $G$ has the following
  properties. Its vertices are in a one-to-one correspondence with the
  edges of $G$. The edges of $L(G)$ connect two vertices if and only if
  the two respective edges of $G$ are emanating from the same vertex.  See
  Fig.~\ref{fig:midedgegraph} (left) and Fig.~\ref{fig:midedgegraphk}
  (left).
\end{definition}

\begin{figure}[t]
  \begin{overpic}[width=.5\textwidth]{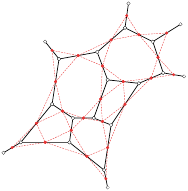}
  \end{overpic}
  \hfill
  \begin{overpic}[width=.49\textwidth]{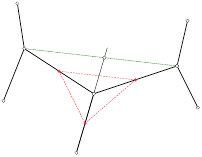}
    \put(5,53){$p_j$}
    \put(40,29){$p_i$}
    \put(90,44){$p_k$}
    \put(40,1){$p_l$}
    \put(46,52){$q_i^{jk}$}
    \put(39,42){$e$}
  \end{overpic}
  \caption{\emph{Left}: A graph $G$ (black lines; white vertices) and its
  line graph $L(G)$ (red dashed edges; red vertices).
  \emph{Right}: A flat vertex star $p_i, p_j, p_k, p_l$. The edge $e$ of the
  line graph $L(G)$ corresponds to the angle $(v_j, v_i, v_k)$.
  The intersection point $q_i^{jk}$ of the straight lines through $p_j
  p_k$ and $p_i p_l$ determines the value of the multiplicative $1$-form
  $q$ on that edge by $q(v_j, v_i, v_k) = (p_j - q_i^{jk}) : (q_i^{jk} -
  p_k)$.
  }
  \label{fig:midedgegraph}
\end{figure}

We now aim at constructing a multiplicative $1$-form on the oriented edges
of the line graph $L(G)$ of a trivalent graph $G$ corresponding
to a linearly generic framework $G(P)$.

Each edge $e$ of $L(G)$ connects the midpoints of edges of the form $(v_j;
v_i)$ and $(v_i; v_k)$, as illustrated in Figure~\ref{fig:midedgegraph}
(right).
We can therefore denote the oriented edges of $L(G)$ by triplets of the
form $e = (v_j, v_i, v_k)$ with the property that the negatively oriented
edge is $-e = (v_k, v_i, v_j)$.

Let us denote the third edge emanating from $v_i$ by $(v_i; v_l)$.
The framework being linearly generic implies that the corresponding vertices
$p_i, p_j, p_k, p_l$ lie in a common $2$-plane. Furthermore, the framework
being linearly generic implies that the straight line connecting $p_i p_l$
intersects the line connecting $p_j p_k$ in a point $q_i^{jk}$.
Consequently, this point gives rise to an affine ratio of the form
\begin{equation}\label{eq:q}
  q(v_j, v_i, v_k) := \frac{p_j - q_i^{jk}}{q_i^{jk} - p_k},
\end{equation}
as ratio of parallel vectors. Clearly, $q(v_j, v_i, v_k) = 1/q(v_k, v_i,
v_j)$ which implies that $q$ is a multiplicative $1$-form on the oriented
edges of the line graph $L(G)$.

\begin{theorem}\label{thm:ratios}
  Let $G(P)$ be a linearly generic trivalent framework. Then there is
  a stress $w$ on $G(P)$ such that the framework $(G(P), w)$ is a non-zero
  tensegrity if and only if the $1$-form $q$ given by Equation~\eqref{eq:q} on
  the line graph $L(G)$ is exact.
\end{theorem}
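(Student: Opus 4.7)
The plan is to establish a direct correspondence between values of the multiplicative $1$-form $q$ and ratios of tension weights at a vertex. At a trivalent vertex $v_i$ with neighbors $v_j, v_k, v_l$, linear genericity forces the three vectors $p_i - p_j$, $p_i - p_k$, $p_i - p_l$ to be coplanar with any two of them linearly independent, so the equilibrium equation determines the triple $(w_{i,j}, w_{i,k}, w_{i,l})$ uniquely up to an overall nonzero scalar. Rearranging this equilibrium shows that the affine combination $(w_{i,j} p_j + w_{i,k} p_k)/(w_{i,j}+w_{i,k})$ lies on the line through $p_i$ and $p_l$, hence must coincide with $q_i^{jk}$. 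Substituting into Equation~\eqref{eq:q} yields the central identity
\[
q(v_j, v_i, v_k) \;=\; \frac{w_{i,k}}{w_{i,j}},
\]
which is the crux of the whole argument.

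\textbf{Forward direction.} Assume $(G(P),w)$ is a non-zero tensegrity. I first show that $w$ is everywhere non-zero: if $w_{i,j} = 0$ for some edge, then equilibrium at $v_i$ together with the linear independence of $p_i - p_k$ and $p_i - p_l$ forces $w_{i,k} = w_{i,l} = 0$; iterating across shared endpoints and using the connectedness of $G$ propagates the vanishing to all edges, contradicting non-zeroness. With all weights nonzero, the central identity applies on every oriented edge of $L(G)$. For any oriented cycle $f_1, \ldots, f_m$ in $L(G)$, writing each $f_r$ as a transition between the two $G$-edges $E_r$ and $E_{r+1}$ sharing a vertex gives $q(f_r) = w_{E_{r+1}}/w_{E_r}$, so the product telescopes to $1$ and $q$ is exact.

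\textbf{Reverse direction.} Conversely, assume $q$ is exact. Since $G$ is connected and trivalent, $L(G)$ is connected as well. Fix any edge $E_0$ of $G$, set $w_{E_0} := 1$, and for every other edge $E$ define $w_E$ as the product of the $q$-values along an arbitrary directed path from $E_0$ to $E$ in $L(G)$; exactness makes this assignment path-independent and, since $q$ takes values in $\R\setminus\{0\}$, nowhere zero. At every trivalent vertex $v_i$, the three weights assigned to the emanating edges realize (by construction, via length-one transitions through $v_i$ in $L(G)$) precisely the ratios dictated by the central identity, which are exactly the ratios characterizing the unique-up-to-scalar equilibrium solution at $v_i$. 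Hence the equilibrium condition holds at every vertex, producing the desired everywhere non-zero self-stress.

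\textbf{Expected obstacle.} The main delicate point is pinning down the central identity $q(v_j, v_i, v_k) = w_{i,k}/w_{i,j}$, which requires careful bookkeeping of the affine ratio $(p_j - q_i^{jk}):(q_i^{jk} - p_k)$ and of the orientation conventions for $q$ inherited from the definition of $L(G)$. A secondary care-point is the propagation argument showing that a non-zero stress on a linearly generic trivalent framework is automatically everywhere non-zero. Once these two ingredients are in place, both directions of the equivalence reduce to standard telescoping and path-lifting arguments familiar from the theory of exact discrete $1$-forms as coboundaries of $0$-cochains.
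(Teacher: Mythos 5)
Your proof is correct and follows essentially the same route as the paper's: both hinge on the central identity $q(v_j,v_i,v_k)=w_{i,k}/w_{i,j}$ derived from the equilibrium condition at a trivalent vertex, prove the forward direction by telescoping the product over a cycle, and prove the reverse direction by transporting a tension along paths with exactness guaranteeing path-independence. Your explicit verification that a non-zero self-stress on a linearly generic trivalent framework is automatically everywhere non-zero (so that the division by $w_{i,j}$ is legitimate) is a small point the paper leaves implicit in this proof (it appears later as a separate proposition), but otherwise the two arguments coincide.
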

\begin{proof}
  Let us first assume that $(G(P), w)$ is a non-zero tensegrity. Since at
  every inner vertex $p_i$ of a trivalent tensegrity the sum of forces
  adds up to zero we obtain
  \begin{equation}\label{eq:star}
    w_{i, j} (p_i - p_j) +
    w_{i, k} (p_i - p_k) +
    w_{i, l} (p_i - p_l) = 0.
  \end{equation}
  The point $q_i^{jk}$ lies on the straight line through $p_i p_l$ and can
  therefore be written in the form
  $$
  q_i^{jk} = p_i + \lambda (p_i - p_l)
  $$
  for some $\lambda \in \R$. Inserting Equation~\eqref{eq:star} yields
  \begin{align*}
    q_i^{jk}
    &=
    p_i + \lambda
    \Big(
    \frac{w_{i, j}}{w_{i, l}} (p_j - p_i)
    +
    \frac{w_{i, k}}{w_{i, l}} (p_k - p_i)
    \Big)
    \\
    &=
    \Big(
    1
    - \lambda \frac{w_{i, j}}{w_{i, l}}
    - \lambda \frac{w_{i, k}}{w_{i, l}}
    \Big)
    p_i
    + \lambda \frac{w_{i, j}}{w_{i, l}} p_j
    + \lambda \frac{w_{i, k}}{w_{i, l}} p_k.
  \end{align*}
  Since $q_i^{jk}$ must lie on the line through $p_j p_k$ we obtain for
  $\lambda = \frac{w_{i, l}}{w_{i, j} + w_{i, k}}$ and therefore the
  affine combination
  $$
  q_i^{jk} =
  \frac{w_{i, j}}{w_{i, j} + w_{i, k}} p_j
  +
  \frac{w_{i, k}}{w_{i, j} + w_{i, k}} p_k.
  $$
  Consequently, for our $q$ in Equation~\eqref{eq:q} we obtain
  \begin{equation}
    \label{eq:fracW}
  q(v_j, v_i, v_k) = \frac{w_{i, k}}{w_{i, j}}.
  \end{equation}

\begin{figure}[t]
  \hfill
  \begin{overpic}[width=.5\textwidth]{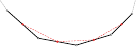}
    \put(6,29){$v_{n - 1}$}
    \put(22,4){$v_n$}
    \put(50,-2){$v_1$}
    \put(82,7){$v_2$}
    \put(97,23){$v_3$}
    \put(26,13){$e_n$}
    \put(50,8){$e_1$}
    \put(73,14){$e_2$}
  \end{overpic}
  \hfill
  \begin{overpic}[width=.25\textwidth]{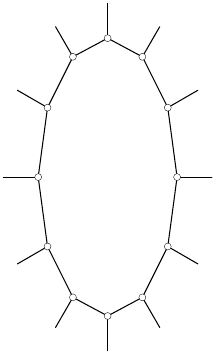}
    \put(34,39){\small$\tilde w_{n, 1}$}
    \put(44,17){\small$w_{n - 1, n}$}
    \put(51,39){\small$w_{n, 1}$}
    \put(50,59){\small$w_{1, 2}$}
    \put(44,79){\small$w_{2, 3}$}
    \put(40,68){\footnotesize$p_2$}
    \put(42,49){\footnotesize$p_1$}
    \put(39,29){\footnotesize$p_n$}
  \end{overpic}
  \hfill
  \hfill{}
  \caption{\emph{Left}: Notations of edges in a cycle in the line graph
  $L(G)$.
  \emph{Right}: A cycle in a trivalent graph. After prescribing a
  tension $w_{1, 2}$ we can compute the tension at edge $(v_n; v_1)$ in
  two ways: First, by enforcing equilibrium at $p_1$ (and getting $\tilde
  w_{n, 1}$), and second by transporting the tension along the cycle
  (resulting in $w_{n, 1}$).}
  \label{fig:cycle}
\end{figure}

  To show the exactness of $q$ we have to show that the product of all
  values along any cycle multiply to $1$. So let $(e_1, \ldots, e_n)$ be a
  cycle of the line graph $L(G)$ where $e_i$ are oriented edges.
  There is a corresponding cycle $(v_1, \ldots, v_n)$ in $G$ such that $e_i$
  corresponds to the angle $(v_{i - 1}, v_i, v_{i + 1})$, where we take
  the indices modulo $n$ (see Figure~\ref{fig:cycle} left).  We compute
  the product of corresponding values of $q$:
  \begin{align*}
    \prod_{i = 1}^n q(e_i)
    &=
    \prod_{i = 1}^n q(v_{i - 1}, v_i, v_{i + 1})
    =
    \prod_{i = 1}^n \frac{w_{i, i + 1}}{w_{i, i - 1}}
    =
    \prod_{i = 1}^n \frac{w_{i, i + 1}}{w_{i - 1, i}}
    \\
    &=
    \frac{w_{1, 2}}{w_{n, 1}}
    \cdot
    \frac{w_{2, 3}}{w_{1, 2}}
    \cdot
    \ldots
    \cdot
    \frac{w_{n - 1, n}}{w_{n - 2, n - 1}}
    \cdot
    \frac{w_{n, 1}}{w_{n - 1, n}}
    = 1,
  \end{align*}
  which shows the first direction of the statement.

  As for the other direction, let us first note that prescribing one
  tension $w_{i, j}$ in a trivalent vertex of a linearly generic
  framework uniquely determines the other two tensions as well since
  Equation~\eqref{eq:star} is then a linear combination of two linearly
  independent vectors with coefficients $w_{i, k}$ and $w_{i, l}$.
  Consequently, after choosing one tension $w_{i, j}$ we can transport it
  to any other vertex along any connected path. This way we could define a
  stress $w$ on the graph $G$, if this construction would be well-defined,
  i.e., if transporting the tension along different paths to the same
  edge would result in the same tensions. Or equivalently, if we
  transport the tension around any cycle we would have to get back to the
  same tension with which we started.

  So let us take an arbitrary cycle $(v_1, \ldots, v_n)$. We choose a non-zero
  tension $w_{1, 2}$ on the edge $(v_1; v_2)$ which immediately determines
  the tension $\tilde w_{n, 1}$ on the edge $(v_n; v_1)$ due to the
  equilibrium condition shown in Equation~\eqref{eq:star}. See also
  Figure~\ref{fig:cycle} (right). The value of the multiplicative
  $1$-form on the oriented edge $(v_{n}, v_1, v_2)$ of $L(G)$ therefore
  has the value
  $$
  q(v_{n}, v_1, v_2) = w_{1, 2}/\tilde w_{n, 1}.
  $$

  On the other hand $w_{1, 2}$ determines the tension $w_{2, 3}$ as edge
  emanating from $v_1$.  Repeating this propagation process we define all
  tensions in the cycle including the last one $w_{n, 1}$. We have
  therefore defined the tension at $(v_n; v_1)$ twice: from the ``left''
  and from the ``right'' as $\tilde w_{n, 1}$ and $w_{n, 1}$. Now the question
  is whether those tensions are the same.

  Our assumption is that the multiplicative $1$-form $q$ is exact which
  implies
  \begin{align*}
    1
    =
    \prod_{i = 1}^n q(v_{i - 1}, v_i, v_{i + 1})
    =
    \frac{w_{1, 2}}{\tilde w_{n, 1}}
    \cdot
    \frac{w_{2, 3}}{w_{1, 2}}
    \cdot
    \ldots
    \cdot
    \frac{w_{n - 1, n}}{w_{n - 2, n - 1}}
    \cdot
    \frac{w_{n, 1}}{w_{n - 1, n}}
    =
    \frac{w_{n, 1}}{\tilde w_{n, 1}},
  \end{align*}
  and therefore $w_{n, 1} = \tilde w_{n, 1}$. Consequently, we can
  consistently define a stress $w$ (uniquely up to scaling) on $G(P)$ such
  that the framework $(G(P), w)$ is a non-zero tensegrity.
\end{proof}

The following corollary follows immediately from Theorem~\ref{thm:ratios}
and its proof, in particular from Equation~\eqref{eq:fracW}.

\begin{corollary}
  Let $G(P)$ be a linearly generic trivalent framework and let $w$ be
  a non-zero stress on $G(P)$. Then the framework $(G(P), w)$ is a non-zero
  tensegrity if and only if the $1$-form
  $$
    \tilde q(v_j, v_i, v_k) := \frac{w_{i, k}}{w_{i, j}},
  $$
  defined on the line graph $L(G)$ is exact.
  \qed
\end{corollary}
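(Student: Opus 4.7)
The plan is to derive the corollary as a direct reformulation of Theorem~\ref{thm:ratios} by invoking Equation~\eqref{eq:fracW}, which identifies the stress-ratio $w_{i,k}/w_{i,j}$ with the geometric ratio $q(v_j,v_i,v_k)$ of~\eqref{eq:q} whenever $w$ is a self-stress.

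For the forward direction I first verify that $\tilde q$ is well-defined, i.e., that no $w_{i,j}$ vanishes. If some $w_{i,j}=0$ at a vertex $v_i$ with neighbors $v_j,v_k,v_l$, then the equilibrium at $v_i$ reduces to $w_{i,k}(p_i-p_k)+w_{i,l}(p_i-p_l)=0$; by linear genericity the two vectors are linearly independent, forcing $w_{i,k}=w_{i,l}=0$, and connectedness of $G$ propagates this until the entire stress vanishes, contradicting non-zeroness. Equation~\eqref{eq:fracW} then gives $\tilde q=q$ pointwise on $L(G)$, and exactness of $\tilde q$ follows from the exactness of $q$ asserted by Theorem~\ref{thm:ratios}.

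For the converse I would mirror the propagation/consistency step in the second half of the proof of Theorem~\ref{thm:ratios}. At each vertex $v_i$ the one-dimensional space of self-stress ratios is determined (up to a common scale) by the linear relation among $p_i-p_j$, $p_i-p_k$, $p_i-p_l$, and the derivation producing~\eqref{eq:fracW} identifies these ratios with the values of $q$. Prescribing a tension on one edge and propagating along paths yields candidate tensions whose single-valuedness around every closed loop is precisely the exactness of $\tilde q$ on the corresponding cycle of $L(G)$. Hence exactness of $\tilde q$ guarantees a path-independent assignment of tensions that satisfies equilibrium everywhere and agrees with $w$ up to overall scale, so $(G(P),w)$ is a non-zero tensegrity. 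The main subtlety lies in this converse step, since exactness of $\tilde q$ is a cycle-product condition that must be converted into pointwise equilibrium at each vertex; that bridge is exactly the propagation argument already carried out in Theorem~\ref{thm:ratios}.
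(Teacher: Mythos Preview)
Your forward direction is correct and is exactly what the paper intends: once you know (as you argue) that a non-zero self-stress on a linearly generic trivalent framework is everywhere non-zero, Equation~\eqref{eq:fracW} gives $\tilde q=q$, and exactness of $\tilde q$ follows from Theorem~\ref{thm:ratios}. This is the content of the paper's one-line justification.

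Your converse argument, however, has a real gap. The propagation procedure you borrow from Theorem~\ref{thm:ratios} transports tensions using the \emph{equilibrium} condition at each vertex; its consistency is governed by the \emph{geometric} $1$-form $q$, not by $\tilde q$. When you write that ``the derivation producing~\eqref{eq:fracW} identifies these ratios with the values of $q$'', you are invoking an identity that was derived under the hypothesis that $w$ is already a self-stress --- precisely what you are trying to establish --- so the step is circular. The claim that the propagated self-stress ``agrees with $w$ up to overall scale'' is therefore unsupported.

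In fact the converse, read literally, cannot be saved: $\tilde q$ is exact for \emph{every} everywhere non-zero stress $w$. The assignment $f\bigl((v_i;v_j)\bigr):=w_{i,j}$ is a well-defined potential on the vertices of $L(G)$ (because $w_{i,j}=w_{j,i}$), and
\[
\tilde q(v_j,v_i,v_k)=\frac{w_{i,k}}{w_{i,j}}=\frac{f\bigl((v_i;v_k)\bigr)}{f\bigl((v_i;v_j)\bigr)}.
\]
So exactness of $\tilde q$ imposes no constraint on $w$ at all, and there is nothing to prevent $w$ from failing equilibrium at every vertex. The paper's own justification (the sentence before the corollary) appeals only to Equation~\eqref{eq:fracW}, which establishes only the forward implication; the ``only if'' should be read in that light.
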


\subsection{Special cases of trivalent cycles}
\label{subsec:specialcase}

In this section we will consider two special cases of cycles and briefly
reflect on what Theorem~\ref{thm:ratios} means in these cases.

\underline{$n = 3$}: In that case the cycle is a triangle and the points
$q_i^{jk}$ lie on the edges of the triangle opposite to $p_i$.
Consequently, the exactness of the $1$-form on that cycle is precisely the
setting of the classical Ceva's theorem (see e.g.,~\cite{richter-gebert-2011}). Therefore, the three
lines $p_1 q_1^{2,3}$, $p_2 q_2^{3,1}$, and $p_3 q_3^{1,2}$ intersect in one point
(cf.~\cite{Karpenkov2019} and see Figure~\ref{fig:specialcase} left).

\underline{$n = 4$}: In the case of a quadrilateral the points $q_i^{jk}$
lie on the diagonals (see Figure~\ref{fig:specialcase} right). Exactness of the
$1$-form on that cycle is equivalent to
$$
1 =
q(v_4, v_1, v_2)
\cdot
q(v_1, v_2, v_3)
\cdot
q(v_2, v_3, v_4)
\cdot
q(v_3, v_4, v_1),
$$
which is further equivalent to
$$
q(v_4, v_1, v_2)
\cdot
q(v_2, v_3, v_4)
=
\frac{1}{
q(v_1, v_2, v_3)
\cdot
q(v_3, v_4, v_1)
},
$$
and further to
$$
\frac{p_4 - q_1^{4, 2}}{q_1^{4, 2} - p_2}
\cdot
\frac{p_2 - q_3^{2, 4}}{q_3^{2, 4} - p_4}
=
\frac{q_2^{1, 3} - p_3}{p_1 - q_2^{1, 3}}
\cdot
\frac{q_4^{3, 1} - p_1}{p_3 - q_4^{3, 1}}.
$$
The last equation is an equation of cross-ratios, namely
\begin{equation}
  \label{eq:cr}
\crr(q_1^{4, 2}, p_4, q_3^{2, 4}, p_2)
=
\crr(q_2^{1, 3}, p_3, q_4^{3, 1}, p_1).
\end{equation}

\begin{figure}[t]
  \hfill
  \begin{overpic}[width=.36\textwidth]{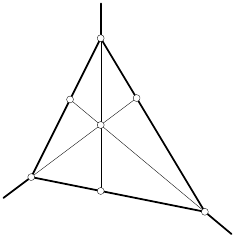}
    \put(12,18){$p_1$}
    \put(80,5){$p_2$}
    \put(46,85){$p_3$}
    \put(61,58){\small$q_1^{2,3}$}
    \put(18,57){\small$q_2^{3,1}$}
    \put(44,22){\small$q_3^{1,2}$}
  \end{overpic}
  \hfill
  \begin{overpic}[width=.36\textwidth]{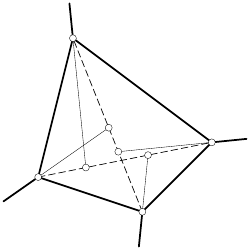}
    \put(12,22){$p_1$}
    \put(60,13){$p_2$}
    \put(84,47){$p_3$}
    \put(19,84){$p_4$}
    \put(44,52){\small$q_1$}
    \put(61,34){\small$q_2$}
    \put(48,42){\small$q_3$}
    \put(32,27){\small$q_4$}
  \end{overpic}
  \hfill{}
  \caption{\emph{Left}: The cycle is a triangle. The exactness of the
  $1$-form on that triangle is equivalent to the three ``outward''
  pointing edges intersecting in one point, i.e., Ceva's configuration.
  \emph{Right}: The cycle is a quadrilateral. Then the three ``outward''
  pointing edges intersect the respective diagonals in points $q_i$.  We
  abbreviate $q_1^{2, 4}$ simply by $q_1$ etc.  The exactness of the
  $1$-form is equivalent to
  $\crr(q_1, p_4, q_3, p_2)
  =
  \crr(q_2, p_3, q_4, p_1)$.
  }
  \label{fig:specialcase}
\end{figure}

\begin{example}
  It is well known (see e.g., \cite{Doray2010}) that the complete graph
  $K_{3,3}$, which is trivalent, with vertices in $\R^2$ is a tensegrity
  if and only if the vertices lie on a conic (see Figure~\ref{fig:conic}).
  That property can also be shown easily within our setting of exact
  multiplicative $1$-forms as follows.
  According to Steiner's definition of conics, the property of six points
  lying on a conic is equivalent to the four lines $p_5 p_i$ and $p_6 p_i$
  for $i = 1, \ldots, 4$ being related by a projectivity (a projective
  map). Or equivalently that means that the cross-ratios of these pair of
  four lines are the same. Let us consider the cycle $(v_1, v_2, v_3,
  v_4)$ with four vertices. Then Equation~\eqref{eq:cr} holds for this
  cycle which we will use in the following computation. Further, we have
  \begin{align*}
    &\crr(p_5 p_2, p_5 p_3, p_5 p_4, p_5 p_1)
    =
    \crr(q_2, p_3, q_4, p_1)
    \overset{\eqref{eq:cr}}{=}
    \crr(q_1, p_4, q_3, p_2)
    \\
    =&
    \crr(p_6 p_1, p_6 p_4, p_6 p_3, p_6 p_2)
    =
    \crr(p_6 p_2, p_6 p_3, p_6 p_4, p_6 p_1),
  \end{align*}
  where the last equality holds because $\crr(a, b, c, d) = \crr(d, c, b, a)$.
\end{example}

\begin{figure}[h]
  \centerline{
  \begin{overpic}[width=.5\textwidth]{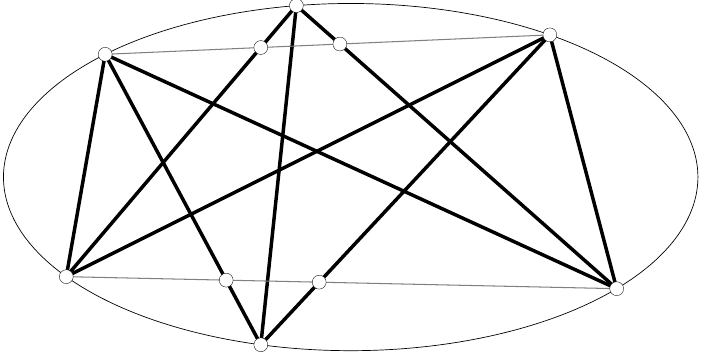}
    \put(10,45){\footnotesize$p_1$}
    \put(7,7){\footnotesize$p_2$}
    \put(80,47){\footnotesize$p_3$}
    \put(88,5){\footnotesize$p_4$}
    \put(37,52){\footnotesize$p_5$}
    \put(31,-2){\footnotesize$p_6$}
    \put(28,7){\footnotesize$q_1$}
    \put(32,45.4){\footnotesize$q_2$}
    \put(45,6){\footnotesize$q_3$}
    \put(50,46){\footnotesize$q_4$}
  \end{overpic}
}
  \caption{Six points $p_1, \ldots, p_6$ in the $2$-plane $\R^2$ form a
  tensegrity if and only if the six points lie on a conic.
  }
  \label{fig:conic}
\end{figure}

\begin{figure}[h]
  \begin{overpic}[width=.49\textwidth]{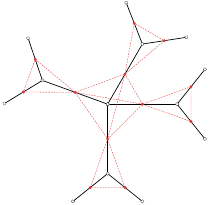}
  \end{overpic}
  \hfill
  \begin{overpic}[width=.49\textwidth]{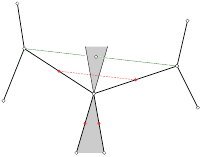}
    \put(5,53){$p_j$}
    \put(40,29){$p_i$}
    \put(90,44){$p_l$}
    \put(30,1){$p_n$}
    \put(54,1){$p_m$}
    \put(47,53){$q_i^{jl}$}
    \put(36,43){$e$}
  \end{overpic}
  \caption{\emph{Left}: The line graph $L(G)$ of a
  \emph{general} graph $G$.
  The new edges (red dashed) connect midpoints of old adjacent edges. A
  vertex star of valence three generates three new edges, a vertex star of
  valence four generates six new edges.
  \emph{Right}: We construct the discrete multiplicative $1$-form on edges
  of the line graph by intersecting the line $p_j p_l$
  with the affine subspace $\spann(\bigcup_{m \neq j, l} p_m)$.
  }
  \label{fig:midedgegraphk}
\end{figure}

\subsection{Tensegrities over $k$-valent graphs}

We will now generalize the geometric characterization of trivalent
tensegrities (of Section~\ref{subsec:trivalent}) to linearly generic
$k$-valent tensegrities in $\R^d$ with $k \leq d + 1$.

Let us consider a $k$-valent vertex star with inner vertex $v_i$ and
adjacent vertices $v_1, \ldots, v_k$. Again we can denote an oriented
edge of the line graph $L(G)$ by $(v_j, v_i, v_l)$ (with $1 \leq
j \neq l \leq n$). See also Figure~\ref{fig:midedgegraphk} (right). Since
$G$ is linearly generic, the subspaces
$\spann(p_i, p_j, p_l)$
and
$\spann(\! \bigcup\limits_{m \neq j, l \atop v_m \sim v_i} p_m)$
intersect in a line $L$, where $v_m \sim v_i$ means $v_m$ is adjacent to
$v_i$. Consequently, this line $L$
intersects the line $p_j p_l$ in a point $q_i^{jl}$. In the trivalent
case, $L$ is simply the line $p_i p_l$.
Analogously to the trivalent case we define the discrete multiplicative
$1$-form as
\begin{equation}
  \label{eq:kq}
  q(v_j, v_i, v_l) := \frac{p_j - q_i^{jl}}{q_i^{jl} - p_l}.
\end{equation}
Now the proof of Theorem~\ref{thm:ratios} can be repeated basically word
by word which implies the following theorem.
\begin{theorem}\label{MainTheorem1}
  Let $G(P)$ be a linearly generic framework in $\R^d$ with vertices of
  valence at most $d + 1$. Then there is a stress $w$ on $G(P)$ such that
  the framework $(G(P), w)$ is a non-zero tensegrity if and only if the
  $1$-form $q$ given by Equation~\eqref{eq:kq} on the line graph $L(G)$ is
  exact.
  \qed
\end{theorem}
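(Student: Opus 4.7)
The plan is to follow the template of the proof of Theorem~\ref{thm:ratios} and observe that the entire argument reduces to the single local identity $q(v_j, v_i, v_l) = w_{i,l}/w_{i,j}$ from Equation~\eqref{eq:fracW}, after which the cycle-product telescoping and the transport/consistency arguments transcribe verbatim. The only genuinely new step is to re-derive this identity in a vertex star of valence $k \leq d + 1$.

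For the forward implication, at an inner vertex $v_i$ of valence $k$, I would rewrite the equilibrium equation by isolating the two edges $(v_i; v_j)$ and $(v_i; v_l)$:
$$
w_{i,j}(p_j - p_i) + w_{i,l}(p_l - p_i)
=
\sum_{m \neq j, l} w_{i,m}(p_m - p_i).
$$
The vector on the left lies in the $2$-plane $\spann(p_i, p_j, p_l)$, and the vector on the right lies in the affine subspace through $p_i$ spanned by the remaining $k-2$ neighbors. Linear genericity forces these two flats to meet inside the $(k-1)$-plane of the vertex star in exactly the one line $L$ through $p_i$ from the statement (the dimension count is $2 + (k-2) - (k-1) = 1$), and the displayed identity exhibits a specific vector along $L$. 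Intersecting $L$ with the line through $p_j p_l$ and requiring the coefficient of $p_i$ in the resulting affine combination to vanish, I read off
$$
q_i^{jl}
=
\tfrac{w_{i,j}}{w_{i,j} + w_{i,l}}\, p_j + \tfrac{w_{i,l}}{w_{i,j} + w_{i,l}}\, p_l,
$$
which collapses Equation~\eqref{eq:kq} to $q(v_j, v_i, v_l) = w_{i,l}/w_{i,j}$.

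With this local identity established, the same telescoping $\prod_{i = 1}^n q(v_{i-1}, v_i, v_{i+1}) = \prod_{i = 1}^n w_{i,i+1}/w_{i-1,i} = 1$ shows exactness of $q$ on any cycle. For the converse, at each $k$-valent vertex the equilibrium is a single vector equation whose coefficient matrix has rank $k - 1$ by linear genericity, so the solution space for the incident tensions is one-dimensional; fixing any single $w_{i,j} \neq 0$ determines all the other tensions at $v_i$, with ratios precisely given by the $q$-values on the line-graph edges through $v_i$. Transporting along any cycle $(v_1, \ldots, v_n)$ then produces two candidate tensions on the closing edge whose ratio is $\prod_i q(v_{i-1}, v_i, v_{i+1})$; exactness forces this ratio to be $1$, so the stress extends consistently and non-trivially, yielding a non-zero tensegrity.

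The main obstacle, and really the only non-routine step, is the affine-coefficient computation producing the weights $w_{i,j}/(w_{i,j}+w_{i,l})$ and $w_{i,l}/(w_{i,j}+w_{i,l})$ for $q_i^{jl}$. The delicate point is to verify that the intersection of the $2$-plane $\spann(p_i, p_j, p_l)$ with the affine span through $p_i$ of the remaining neighbors is exactly a line (not a point and not a higher-dimensional flat), which is precisely what linear genericity guarantees via the dimension count in the ambient $(k-1)$-plane. Once that local picture is secured, every subsequent step is a straightforward transcription of the trivalent proof.
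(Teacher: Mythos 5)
Your proof is correct and takes essentially the same route as the paper, whose own proof of Theorem~\ref{MainTheorem1} is just the remark that the argument for Theorem~\ref{thm:ratios} repeats word for word; the one genuinely new ingredient you supply --- the identity $q(v_j,v_i,v_l)=w_{i,l}/w_{i,j}$ obtained by intersecting $\spann(p_i,p_j,p_l)$ with the flat through $p_i$ and the remaining neighbours --- is exactly the computation the paper leaves implicit. The only blemish is a harmless sign (the right-hand side of your isolated equilibrium equation should be $\sum_{m\neq j,l}w_{i,m}(p_i-p_m)$), which does not change the line that vector spans and hence does not affect the argument.
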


  \noindent
  \begin{minipage}{.79\textwidth}
Note that if a cycle can be decomposed by two other cycles with common
edges, the product of the multiplicative $1$-form values of the first
cycle equals the product of the decomposing cycles. The values of common
edges that appear in the decomposing cycles in reversed orientations
are reciprocal to each other and simply cancel out in the product. That
leads us to the following remark.
  \end{minipage}
  \hfill
  \begin{minipage}{.19\textwidth}
    \begin{overpic}[width=\textwidth]{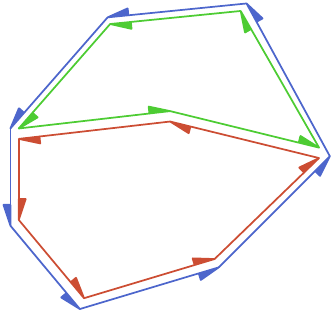}
    \end{overpic}
  \end{minipage}

\begin{remark}
  \label{rem:h1}
  In the previous theorem and in Theorem~\ref{thm:ratios} it is
  sufficient to check the criterion only for
  generator loops of the first homology group $H_1(G)$ of the graph (if we
  consider the graph as topological space), because all other loops can be
  decomposed by those.
  These conditions for different generators of  $H_1(G)$ may still
  coincide, as the realisability codimension in the space of all
  (including zero-force load) tensegrities is defined by the combinatorics
  of the graphs (e.g., it is one condition for Laman graphs in the
  plane).  Some of the conditions will correspond to different strata.
\end{remark}

\section{Join/intersection conditions for frameworks in 3D-general position}\label{sec:gca}

In this section we construct join/intersection conditions for frameworks
whose all 4-tuples of vertices in any cycle span a $3$-plane.
We start in Section~\ref{Framed cycles and their frameworks}
with the case of frameworks for so-called framed cycles.
We introduce $\wu$-surgeries on framed cycles that preserve the property to admit
a non-zero tensegrity and that reduce the amount of vertices of framed cycles.
These properties will lead to explicit expressions in terms of
join/intersection operations in projective geometry.
Further, in Section~\ref{On existence and uniqueness of tensegrities for
frameworks in general position}
we prove that a sufficiently generic framework admits a non-zero tensegrity
if and only if all its associated framed cycle frameworks
admit non-zero tensegrities (Theorem~\ref{cycle-tens}).
Finally, in Section~\ref{subsec:projcond} we briefly recall the basic
notions join/intersection in projective geometry and
construct conditions within that framework for the existence of tensegrities
for given graphs that are not generically flexible
(Theorem~\ref{GC-conditions}).
All frameworks in this section are in $\R^d$ with $d \geq 3$.

\subsection{Framed cycles and their frameworks}\label{Framed cycles and their frameworks}
We start this section with basic definitions, some properties of framed
cycles and their generic frameworks.
Further, we introduce $\wu$-surgeries that take
frameworks in 3D-general position to generic frameworks of framed cycles.
We show also that $\wu$-surgeries preserve the property of admitting a non-zero tensegrity.

\subsubsection{General definitions}

We say that a graph is a \emph{cycle} if it is homeomorphic to a circle.

\begin{definition}
Let $C=(c_1,\ldots,c_n)$ and $B=(b_{1},\ldots,b_{n})$ be two $n$-tuples of points.
  A \emph{framed cycle} $C_B=(C,B)$ is the cycle $c_1,\ldots,c_n$ with attached edges $b_ic_i$  for $i=1,\ldots, n$.
\end{definition}

\begin{definition}\label{def-3d-g-f}
  We say that a framework $C_B(P)$ of a framed cycle $C_B$ is \emph{in
  3D-general flat position} if
  \begin{itemize}
    \item $C_B(P)$ is linearly generic (see Definition~\ref{defn:lingen});
    \item there are no four points of $C(P)$ contained in a $2$-plane (only
      for the cycle $C$).
  \end{itemize}
\end{definition}

\begin{remark}
  Notice that linear genericity in particular implies that all edges
  emanating from the same vertex of a framed cycle are contained in a
  $2$-plane; and that $G(P)(b_i)\ne G(P)(c_i)$ for all admissible $i$.
\end{remark}

\subsubsection{A preliminary observation}

Let us formulate a preliminary statement for the definition of $\wu$-surgeries.

Recall that a cycle $C_B(P)$ is in 3D-general flat position if it is linearly generic (every three edges emanating from the same vertex span a $2$-plane)
and if there are no four points of $C_B(P)$ contained in a $2$-plane (see Definition~\ref{def-3d-g-f}).

\begin{proposition}\label{Predefinition-HF}
Let a framed cycle framework $C_B(P)$ be in 3D-general flat position.
Let also
$$
  G(P)(b_i)=e_i, \quad G(P)(c_i)=r_i, \quad i=1,\ldots, n.
$$
Then we have the following two statements:
\begin{itemize}
\item The line $e_{i-1}e_{i}$ is not contained in the $2$-plane
  $r_{i-2}r_{i-1} r_{i+1}$;
\end{itemize}
Denote by $\hat e_{i-1}$ the (projective) intersection point of the line
  $e_{i-1}e_{i}$ and the $2$-plane $r_{i-2}r_{i-1} r_{i+1}$ (see
  Figure~\ref{fig:wu1}).  Then additionally we have:
\begin{itemize}
\item $\hat e_{i-1} \notin r_{i-2}r_{i-1}$;
\item $\hat e_{i-1} \notin r_{i-1} r_{i+1}$.
\end{itemize}
\end{proposition}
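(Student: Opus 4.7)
The plan is to introduce three auxiliary $2$-planes and exploit their pairwise intersections to force contradictions. Let $\pi_{i-1}$ be the $2$-plane spanned by the edges of the framed cycle emanating from $c_{i-1}$, which by linear genericity contains $r_{i-2}, r_{i-1}, r_i, e_{i-1}$; let $\pi_i$ be the analogous $2$-plane at $c_i$, containing $r_{i-1}, r_i, r_{i+1}, e_i$; and set $\sigma := \spann(r_{i-2}, r_{i-1}, r_{i+1})$. The ``no four cycle points in a $2$-plane'' condition from Definition~\ref{def-3d-g-f} immediately tells me that $r_{i-2}, r_{i-1}, r_i, r_{i+1}$ are affinely independent, so that $r_{i-2}, r_{i-1}, r_{i+1}$ span a genuine $2$-plane $\sigma$, and that $r_i \notin \sigma$ and $r_{i+1} \notin \pi_{i-1}$.

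The next step is to check that $\pi_{i-1}, \pi_i, \sigma$ are pairwise distinct and to pin down their intersections. Each pair shares two distinct $r_j$'s and hence contains the line through them; since $r_i \in \pi_{i-1} \cap \pi_i$ but $r_i \notin \sigma$, and $r_{i+1} \in \pi_i$ but $r_{i+1} \notin \pi_{i-1}$, the three $2$-planes are pairwise distinct. Therefore each pairwise intersection is exactly the line through the two shared vertices: $\pi_{i-1} \cap \pi_i = r_{i-1}r_i$, $\pi_{i-1} \cap \sigma = r_{i-2}r_{i-1}$, and $\pi_i \cap \sigma = r_{i-1}r_{i+1}$ (all understood projectively).

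With these intersections in hand, the three bullets follow from a uniform contradiction scheme. For the first bullet, if $e_{i-1}e_i \subset \sigma$ then $e_{i-1} \in \pi_{i-1} \cap \sigma = r_{i-2}r_{i-1}$, which is ruled out because at $c_{i-1}$ the edges to $c_{i-2}$ and $b_{i-1}$ must span a $2$-plane, forcing $e_{i-1}, r_{i-1}, r_{i-2}$ to be non-collinear. For the second bullet, assuming $\hat e_{i-1} \in r_{i-2}r_{i-1} \subset \pi_{i-1}$ together with $e_{i-1} \in \pi_{i-1}$ drags the whole line $e_{i-1}e_i$ into $\pi_{i-1}$, so $e_i \in \pi_{i-1} \cap \pi_i = r_{i-1}r_i$, contradicting linear genericity at $c_i$. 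The third bullet is the mirror argument: $\hat e_{i-1} \in r_{i-1}r_{i+1} \subset \pi_i$ pulls $e_{i-1}$ into $\pi_i$, hence into $\pi_{i-1} \cap \pi_i = r_{i-1}r_i$, again contradicting linear genericity, this time at $c_{i-1}$.

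The only non-routine obstacle is the setup itself: correctly identifying $\pi_{i-1}, \pi_i, \sigma$ and verifying that all three are distinct $2$-planes with the asserted intersection lines. This is precisely where the ``no four cycle points in a $2$-plane'' condition is indispensable; without it $\pi_{i-1}$ and $\pi_i$ could coincide, $\sigma$ could degenerate, and the whole argument would unravel. Once this dimension bookkeeping is in place, the three bullets reduce to one-line invocations of linear genericity.
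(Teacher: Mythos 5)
Your proof is correct and follows essentially the same route as the paper's: both arguments combine linear genericity at the vertex stars of $c_{i-1}$ and $c_i$ with the no-four-cycle-points-in-a-$2$-plane condition, and you merely package the bookkeeping more cleanly through the three planes $\pi_{i-1},\pi_i,\sigma$ and their pairwise intersection lines. One shared caveat: in the third bullet your step of ``pulling $e_{i-1}$ into $\pi_i$'' needs $\hat e_{i-1}\ne e_i$, i.e.\ $e_i\notin r_{i-1}r_{i+1}$, which the stated definition of linear genericity does not literally exclude; however, the paper's own proof makes the equivalent silent assumption that $e_i,r_{i-1},r_{i+1}$ span a $2$-plane, so this is not a defect relative to the published argument.
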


\begin{figure}[h]
  \centerline{
  \begin{overpic}[width=.62\textwidth]{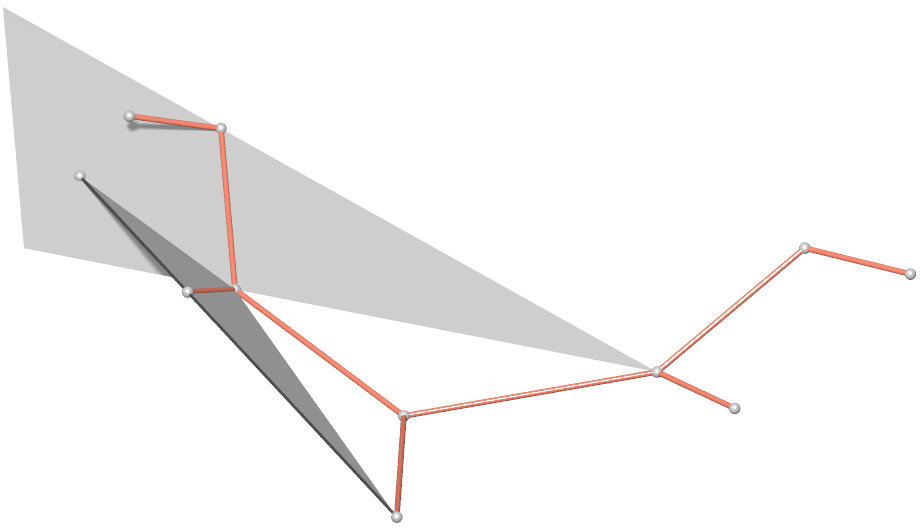}
    \put(10,47){$e_{i - 2}$}
    \put(13,22){$e_{i - 1}$}
    \put(45,0){$e_i$}
    \put(77,9){$e_{i + 1}$}
    \put(95,23){$e_{i + 2}$}
    \put(25,43){$r_{i - 2}$}
    \put(27,26){$r_{i - 1}$}
    \put(43,14){$r_i$}
    \put(66,20){$r_{i + 1}$}
    \put(83,33){$r_{i + 2}$}
    \put(3,40){$\hat e_{i - 1}$}
  \end{overpic}
}
  \caption{Definition of $\hat e_{i-1}$.
  }
  \label{fig:wu1}
\end{figure}

\begin{remark}
  The theory of tensegrities (or equivalently the theory of infinitesimal
  rigidity) is projectively invariant.  So we do not consider special
  cases of parallel objects.  They are not parallel after an appropriate
  choice of an affine chart.
\end{remark}

\begin{proof}[Proof of Proposition~\ref{Predefinition-HF}]
First of all, the point $e_{i-1}$ is not in the $2$-plane
  $$
  r_{i-2}r_{i-1} r_{i+1}
  $$
  (cf.\ Figure~\ref{fig:wu1}), as otherwise
$$
  \spann(r_{i-2},r_{i-1},r_{i})=\spann(r_{i-2},r_{i-1},e_{i-1})
=\spann(r_{i-2},r_{i-1},r_{i+1})
$$
(here the first equality holds as vectors $r_{i-2}e_{i-1}$, $ r_{i-1}e_{i-1}$ and $r_{i}e_{i-1}$ are adjacent to the same edge and, therefore, linear genericity of $C_B(P)$ implies that they are contained in a $2$-plane; the second equality holds by the assumption $e_{i-1}\in  r_{i-2}r_{i-1} r_{i+1}$)
which would imply that the points $r_{i-2},r_{i-1}, r_i,r_{i+1}$ are
  contained in a $2$-plane, and therefore $C(P)$
is not in a 3D-general flat position.
Therefore,
the line $e_{i-1}e_{i}$ is not in the $2$-plane $r_{i-2}r_{i-1} r_{i+1}$.

Secondly, if $\hat e_{i-1}\in r_{i-2}r_{i-1}$, then the points
$e_i,e_{i-1},r_{i-2},r_{i-1}$ are in a $2$-plane.
Now the point $r_i$ is in this $2$-plane as it is in the span of $e_{i-1},r_{i-1},r_{i-2}$;
and additionally $r_{i+1}$ is in this $2$-plane as it is in the span of $e_{i},r_{i-1},r_{i}$.
Therefore, $r_{i-2},r_{i-1},r_{i},r_{i+1}$ are in this $2$-plane, which contradicts to
flat 3D-genericity of the cycle.

Finally, if $\hat e_{i-1}\in r_{i-1}r_{i+1}$, then the points
$e_i,e_{i-1},r_{i-1},r_{i+1}$ are in a $2$-plane.
Now the point $r_i$ is in this $2$-plane as it is in the span of $e_i,r_{i-1},r_{i+1}$;
and additionally $r_{i-2}$ is in this $2$-plane as it is in the span of $e_{i-1},r_{i-1},r_{i}$.
Therefore, $r_{i-2},r_{i-1},r_{i},r_{i+1}$ are in this $2$-plane, which contradicts to
flat 3D-genericity of the cycle.

This concludes the proof of all statements of the proposition.
\end{proof}

For the definition of $\wu$-surgeries we need an index\dash symmetric
statement. The following corollary is just the index\dash symmetric version of
Proposition~\ref{Predefinition-HF}.

\begin{corollary}\label{Predefinition-HF2}
Let a framed cycle framework $C_B(P)=(C(P),B(P))$ be in a 3D-general flat position.
Let also
$$
G(P)(b_i)=e_i, \quad G(P)(c_i)=r_i, \quad i=1,\ldots, n.
$$
Then we have the following two statements:
\begin{itemize}
\item The line $e_{i}e_{i+1}$ is not in the $2$-plane $r_{i-1}r_{i+1}
  r_{i+2}$;
Denote by $\hat e_{i+1}$ the (projective) intersection point of the line
  $e_{i}e_{i+1}$ and the $2$-plane $r_{i-1}r_{i+1} r_{i+2}$.
Then additionally we have:
\item $\hat e_{i+1} \notin r_{i+1}r_{i+2}$;
\item $\hat e_{i+1} \notin r_{i-1}r_{i+1}$.
\end{itemize}
\end{corollary}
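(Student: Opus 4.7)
My plan is to derive Corollary~\ref{Predefinition-HF2} directly from Proposition~\ref{Predefinition-HF} by reversing the cyclic order of the vertices. The key observation is that being in 3D-general flat position is symmetric under such a reversal: linear genericity is a local condition at each vertex depending only on the unordered set of incident edges, while the condition that no four points of $C(P)$ lie in a 2-plane is independent of the ordering altogether. Hence, if $C_B(P)$ is in 3D-general flat position, then so is any reindexed version of it.

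Concretely, I would introduce the reversed framed cycle $C'_B$ with framework given by $r'_j := r_{-j}$ and $e'_j := e_{-j}$, where indices are taken modulo $n$. By the observation above, $C'_B(P)$ is again in 3D-general flat position, and hence Proposition~\ref{Predefinition-HF} applies to it. Applying that proposition at the index $i' := -i$ and unpacking, the line $e'_{i'-1}e'_{i'}$ is the line $e_{i+1}e_i = e_ie_{i+1}$, and the 2-plane $r'_{i'-2}r'_{i'-1}r'_{i'+1}$ is the 2-plane $r_{i+2}r_{i+1}r_{i-1}$, i.e.\ the 2-plane $r_{i-1}r_{i+1}r_{i+2}$ appearing in the corollary.

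The first assertion of the proposition then reads precisely that the line $e_ie_{i+1}$ is not contained in the 2-plane $r_{i-1}r_{i+1}r_{i+2}$, which is the first claim of the corollary. Moreover, by construction the intersection point $\hat e'_{i'-1}$ of the line $e'_{i'-1}e'_{i'}$ with the 2-plane $r'_{i'-2}r'_{i'-1}r'_{i'+1}$ coincides with the point $\hat e_{i+1}$ defined in the corollary, since both are the intersection of $e_ie_{i+1}$ with $r_{i-1}r_{i+1}r_{i+2}$. Translating the two non-membership conditions $\hat e'_{i'-1}\notin r'_{i'-2}r'_{i'-1}$ and $\hat e'_{i'-1}\notin r'_{i'-1}r'_{i'+1}$ through the substitution $r'_j=r_{-j}$ yields $\hat e_{i+1}\notin r_{i+2}r_{i+1}$ and $\hat e_{i+1}\notin r_{i+1}r_{i-1}$, which are exactly the remaining two assertions.

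There is no real obstacle here: the corollary is a formal consequence of the proposition together with the orientation-reversal symmetry of the hypotheses. The only thing that needs to be verified beyond routine reindexing is that the intersection point appearing in the reversed setting coincides with the $\hat e_{i+1}$ defined in the corollary statement, and this is immediate from the two definitions.
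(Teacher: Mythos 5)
Your proof is correct and takes essentially the same route as the paper, which simply reverses the cyclic order (the index swap $i\to n-i$) and invokes Proposition~\ref{Predefinition-HF}. Your version is slightly more careful in explicitly noting that 3D-general flat position is preserved under reversal and in tracking the identification of the intersection points, but the underlying argument is identical.
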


\begin{proof}
After swapping the indexes $i\to n-i$ for all $i$ in $C_B$
we arrive at the statement of Proposition~\ref{Predefinition-HF}
for $n-i$.
\end{proof}

\subsubsection{$\wu$-surgeries}

Let us continue with the definition of $\wu$\dash surgeries.

\begin{definition}
Consider a framed cycle
$$
C_B=\big((c_1,\ldots,c_n),(b_{1},\ldots,b_{n})\big),
$$
and its framework
$$
C_B(P)=\big((r_1,\ldots,r_n),(e_{1},\ldots,e_{n})\big)
$$
in 3D-general flat position.  Let $i \in 1, \ldots, n$.
The \emph{$\wu$-surgery} of the cycle $C$ at node $i$ is the cycle
\begin{align*}
\wu_i(C_B(P))
  =\big(
  &(r_1,\ldots,r_{i-2},r_{i-1},r_{i+1},r_{i+2},\ldots, r_n),\\
  &(e_1,\ldots,e_{i-2},\hat e_{i-1}, \hat e_{i+1},e_{i+2}, \ldots, e_n)\big),
\end{align*}
where
$$
\begin{array}{l}
\hat e_{i-1}= e_{i}e_{i-1} \cap r_{i+1}r_{i-1}r_{i-2};\\
\hat e_{i+1}= e_{i}e_{i+1} \cap r_{i-1}r_{i+1}r_{i+2},\\
\end{array}
$$
(see Figure~\ref{fig:wu2}).
\end{definition}

\begin{remark}
Due to Proposition~\ref{Predefinition-HF} and Corollary~\ref{Predefinition-HF2},
the points $\hat e_{i-1}$ and $\hat e_{i+1}$ are uniquely defined.
\end{remark}

\begin{figure}[h]
  \centerline{
  \begin{overpic}[width=.62\textwidth]{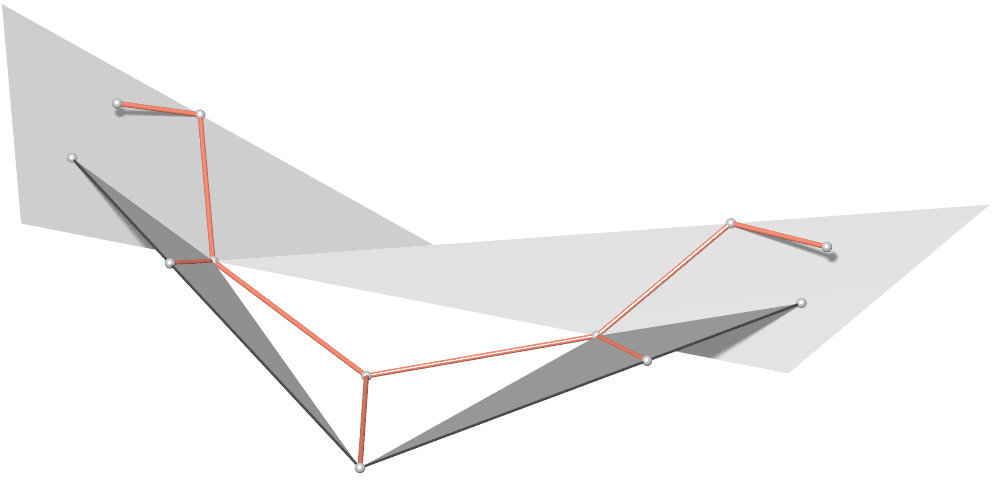}
    \put(6,40){$e_{i - 2}$}
    \put(9,19){$e_{i - 1}$}
    \put(33,-2){$e_i$}
    \put(65,9){$e_{i + 1}$}
    \put(86,22){$e_{i + 2}$}
    \put(21,38){$r_{i - 2}$}
    \put(23,23){$r_{i - 1}$}
    \put(36,12){$r_i$}
    \put(53,17){$r_{i + 1}$}
    \put(70,28){$r_{i + 2}$}
    \put(2,34){$\hat e_{i - 1}$}
    \put(82,15){$\hat e_{i + 1}$}
  \end{overpic}
}
  \caption{The construction of $\wu_i$-surgery. Here
  we exclude the vertices $e_i$ and $r_i$ and replace $e_{i-1}$ and
  $e_{i+1}$ respectively by $\hat e_{i-1}$ and $\hat e_{i+1}$.
  }
  \label{fig:wu2}
\end{figure}

\begin{corollary}
A $\wu$-surgery takes a framework of a framed cycle in 3D-general flat position
to a framework of a framed cycle in 3D-general flat position.
\end{corollary}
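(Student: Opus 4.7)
The plan is to verify the two defining properties of 3D-general flat position (Definition~\ref{def-3d-g-f}) for the new framed cycle $\wu_i(C_B(P))$: linear genericity, and the absence of four cycle points in a common $2$-plane. Since the cycle vertex set of $\wu_i(C_B(P))$ is the subset $\{r_1,\ldots,r_{i-1},r_{i+1},\ldots,r_n\}$ of the original cycle vertices, the second property is inherited immediately from $C_B(P)$. So the real work is the linear genericity check, and only at those vertices whose incident edges actually change.

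First I would observe that the surgery affects only the vertex stars of $c_{i-1}$ and $c_{i+1}$: for every other cycle vertex the three incident edges are identical to the original, so linear genericity there is inherited from $C_B(P)$; the valence-one boundary vertices impose no condition. Thus I only need to inspect $r_{i-1}$ and $r_{i+1}$ in $\wu_i(C_B(P))$.

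Next I would treat $r_{i-1}$. Its three new neighbors are $r_{i-2}$, $r_{i+1}$, and $\hat e_{i-1}$. By its very construction, $\hat e_{i-1}$ lies in $\spann(r_{i-2},r_{i-1},r_{i+1})$, so the three edges emanating from $r_{i-1}$ lie in this common $2$-plane. For linear genericity I must show in addition that no two of these edges are collinear, i.e., that among the four points $r_{i-1}, r_{i-2}, r_{i+1}, \hat e_{i-1}$ no three are collinear. The collinearity of $r_{i-1}, r_{i-2}, r_{i+1}$ is ruled out by the no-four-coplanar condition on $C(P)$, since three collinear cycle points together with any fourth cycle point would lie in a common $2$-plane (and $n\geq 4$, else the surgery is degenerate). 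The remaining two non-collinearities, $\hat e_{i-1}\notin r_{i-2}r_{i-1}$ and $\hat e_{i-1}\notin r_{i-1}r_{i+1}$, are exactly the two conclusions of Proposition~\ref{Predefinition-HF}. The analogous verification at $r_{i+1}$, with neighbors $r_{i-1}$, $r_{i+2}$, $\hat e_{i+1}$, is obtained by invoking Corollary~\ref{Predefinition-HF2} in place of Proposition~\ref{Predefinition-HF}.

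I do not expect a real obstacle; all non-trivial incidence information has already been packaged into Proposition~\ref{Predefinition-HF} and Corollary~\ref{Predefinition-HF2}, and the rest is bookkeeping. The one subtlety to keep in mind is the small-$n$ boundary case: for $n=4$ the surgery collapses the cycle to a triangle and the no-four-coplanar property becomes vacuous, but linear genericity at the modified vertices still holds because the original four cycle points span a $3$-plane and hence no three of them are collinear.
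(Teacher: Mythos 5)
Your proposal is correct and follows essentially the same route as the paper: inherit the no-four-coplanar condition from the vertex subset, and verify linear genericity at the two modified vertex stars using the containments $\hat e_{i\mp 1}\in\spann(r_{i-2},r_{i-1},r_{i+1})$ resp.\ $\spann(r_{i-1},r_{i+1},r_{i+2})$ together with Proposition~\ref{Predefinition-HF} and Corollary~\ref{Predefinition-HF2}. Your explicit derivation of the non-collinearity of $r_{i-2},r_{i-1},r_{i+1}$ from the no-four-coplanar condition, and the remark on the $n=4$ case, are slightly more careful than the paper's wording but do not change the argument.
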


\begin{proof}
The set of $C(P')$-vertices after the surgery is a subset of $C(P)$
therefore, there are no four points of $C(P')$ in a $2$-plane.

By construction we have
$$
  \hat e_{i-1}\in \spann(r_{i-2},r_{i-1},r_{i+1})
  \quad \hbox{and} \quad
  \hat e_{i+1}\in \spann(r_{i-1},r_{i+1},r_{i+2}).
$$

Further, by Proposition~\ref{Predefinition-HF} every two vectors from
$$
\{r_{i-1} - \hat e_{i-1}, r_{i-1} - r_{i-2}, r_{i-1} - r_{i+1}\}
$$
are not collinear.

Finally, by Corollary~\ref{Predefinition-HF2} every two vectors of
$$
\{r_{i+1} - \hat e_{i+1}, r_{i+1} - r_{i+2}, r_{i+1} - r_{i-1}\}
$$
are not collinear.
Therefore, $\wu_i(C_B(P))$ is in 3D-general flat position.
\end{proof}

\subsubsection{Static properties of $\wu$-surgeries}
We continue with the following important property of $\wu$-surgeries.

\begin{proposition}\label{wu-property}
Let $C_B$ be a framed cycle of length $m$ and $i\in\{1,\ldots, m\}$.
A framework $C_B(P)$ in 3D-general flat position admits a non-zero tensegrity if and only if
$\wu_i(C_B(P))$ admits a non-zero tensegrity for every admissible $i$.
\end{proposition}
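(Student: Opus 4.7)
My plan is to build an explicit bijective correspondence between non-zero self-stresses on $C_B(P)$ and on $\wu_i(C_B(P))$ by ``re-routing'' the forces that pass through the removed vertex $c_i$. The algebraic engine is the equilibrium equation of $w$ at $c_i$, which allows me to rewrite the force $w_{i-1,i}(r_i-r_{i-1})$ as a sum of forces along $(r_{i+1}-r_{i-1})$ and along $(e_i-r_{i-1})$; the geometric engine is the uniqueness of $\hat e_{i\pm 1}$ guaranteed by Proposition~\ref{Predefinition-HF} and its index-mirror Corollary~\ref{Predefinition-HF2}.

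For the forward direction I take a non-zero self-stress $w$ on $C_B(P)$ and set $w'=w$ on every edge untouched by the surgery, in particular on $(c_{i-2};c_{i-1})$ and $(c_{i+1};c_{i+2})$. Equilibrium of $w'$ at every vertex outside $\{c_{i-1},c_{i+1}\}$ then holds automatically. Setting $S_i=w_{i,i-1}+w_{i,i+1}+w_{i,b_i}$ and using the equilibrium of $w$ at $c_i$, one obtains
\[
w_{i-1,i}(r_i-r_{i-1})=\tfrac{w_{i-1,i}\,w_{i,i+1}}{S_i}(r_{i+1}-r_{i-1})+\tfrac{w_{i-1,i}\,w_{i,b_i}}{S_i}(e_i-r_{i-1}).
\]
Substituting this into the equilibrium of $w$ at $c_{i-1}$ and merging the term along $(e_i-r_{i-1})$ with the original boundary force $w_{i-1,b_{i-1}}(e_{i-1}-r_{i-1})$ produces a single force from $r_{i-1}$ to an explicit affine combination $\hat e'_{i-1}$ of $e_{i-1}$ and $e_i$. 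The resulting equation is precisely the equilibrium relation at $c_{i-1}$ required by $\wu_i(C_B(P))$, with $w'_{i-1,i+1}=w_{i-1,i}w_{i,i+1}/S_i$ and $w'_{i-1,\hat b_{i-1}}=w_{i-1,b_{i-1}}+w_{i-1,i}w_{i,b_i}/S_i$, provided that $\hat e'_{i-1}$ coincides with the geometric $\hat e_{i-1}$.

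Verifying $\hat e'_{i-1}=\hat e_{i-1}$ is the step I expect to be the main obstacle, and it is the one place where the geometric content of the surgery enters. By its construction as an affine combination, $\hat e'_{i-1}$ lies on the line $e_{i-1}e_i$; solving the re-routed equilibrium for $\hat e'_{i-1}-r_{i-1}$ exhibits it as a linear combination of $r_{i-2}-r_{i-1}$ and $r_{i+1}-r_{i-1}$, which places $\hat e'_{i-1}$ in $\spann(r_{i-2},r_{i-1},r_{i+1})$. Both defining loci of $\hat e_{i-1}$ are thus realised, and Proposition~\ref{Predefinition-HF} supplies uniqueness of the intersection. The analogous computation at $c_{i+1}$, using Corollary~\ref{Predefinition-HF2}, yields $\hat e'_{i+1}=\hat e_{i+1}$ and produces $w_{i,i-1}w_{i,i+1}/S_i$ again on the shared edge $(c_{i-1};c_{i+1})$, so the two derivations of that weight agree and the construction is globally consistent. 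Non-vanishing of $w'$ is automatic: a non-zero self-stress on a connected linearly generic trivalent framework is non-zero on every edge, so the unchanged edges carry non-zero tensions.

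For the converse I invert the same recipe. Given a non-zero self-stress $w'$ on $\wu_i(C_B(P))$, the equilibrium at $c_i$ in $C_B(P)$ determines the triple $(w_{i,i-1},w_{i,i+1},w_{i,b_i})$ up to a common scalar, which is pinned down by the requirement $w_{i,i-1}w_{i,i+1}/S_i=w'_{i-1,i+1}$; the remaining tensions $w_{i-1,b_{i-1}}$ and $w_{i+1,b_{i+1}}$ are read off from the same formulas as before. The equilibrium of the reconstructed $w$ at $c_{i-1}$ and $c_{i+1}$ follows from the equilibrium of $w'$ at the same vertices together with the geometric characterization of $\hat e_{i\pm 1}$ as lying both on the relevant line and in the relevant $2$-plane --- exactly the same input of Proposition~\ref{Predefinition-HF} and Corollary~\ref{Predefinition-HF2}, now read in the opposite direction.
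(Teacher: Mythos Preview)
Your approach is correct and is the same idea as the paper's, which packages your re-routing as the subtraction of an auxiliary triangle tensegrity $(C_{B,i}^3(P),\hat w)$ on the framed $3$-cycle $\big((r_{i-1},r_i,r_{i+1}),(e_i,e_i,e_i)\big)$ with $\hat w=w$ on the edges through $r_i$; after subtraction the forces at $r_i$ vanish and the leftover boundary contributions at $r_{i\pm 1}$ are seen to point along $r_{i\pm 1}\hat e_{i\pm 1}$, exactly your merged force. The converse simply adds back such a triangle, which is precisely the inversion you describe.
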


\begin{proof}
Let $C_B(P)$ admit a tensegrity $G(C_B(P),w)$.
First, we construct a framed cycle tensegrity $(C_{B,i}^3(P),\hat w)$.
Let
$$
C_{B,i}^3(P)=
\big((r_{i-1},r_i,r_{i+1}),
(e_{i},e_i,e_{i})\big)
$$
(see Figure~\ref{fig:cb3}).

\begin{figure}[h]
  \centerline{
  \begin{overpic}[width=.42\textwidth]{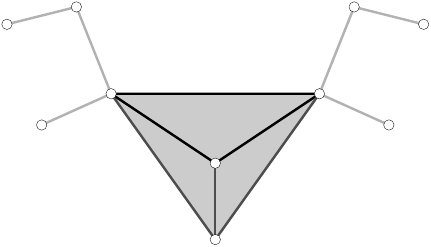}
    \put(20,54){$r_{i - 2}$}
    \put(25,38){$r_{i - 1}$}
    \put(47,24){$r_i$}
    \put(61,38){$r_{i + 1}$}
    \put(67,54){$r_{i + 2}$}
    \put(-1,46){$e_{i - 2}$}
    \put(4,23){$e_{i - 1}$}
    \put(42,0){$e_i$}
    \put(83,23){$e_{i + 1}$}
    \put(91,46){$e_{i + 2}$}
  \end{overpic}
}
  \caption{
    The framed cycle $C_{B, i}^3(P)$ is illustrated by the shaded area.
    Notice that the boundary points for this cycle all coincide with $e_i$.
  }
  \label{fig:cb3}
\end{figure}

The stress $\hat w$ is defined from the following condition:
at all edges adjacent to $r_i$ the stress $\hat w$ coincides with the stress $w$ for $(C_B(P),w)$.
It is clear that the tensions $\hat w$ on the remaining edges of $C_{B,i}^3(P)$
are defined in the unique way.

Let us now subtract $(C_{B,i}^3(P),\hat w)$ from $(C_B(P),w)$.
We have:
\begin{itemize}
\item zero stresses at all vertices adjacent to $r_i$.
\item the sum of vectors of forces  $\lambda r_{i-1}e_{i-1}$ and $\mu r_{i-1}e_{i}$
(for some non-zero $\lambda$ and $\mu$) should be in the $2$-plane spanned by $r_{i-2},r_{i-1},r_{i+1}$
and therefore it is in the line $r_{i-1}\hat e_{i-1}$.
\item the sum of vectors of forces  $\lambda r_{i+1}e_{i+1}$ and $\mu r_{i+1}e_{i}$
(for some non-zero $\lambda$ and $\mu$) should be in the $2$-plane spanned by $r_{i-1},r_{i+1},r_{i+2}$
and therefore it is in the line $r_{i+1}\hat e_{i+1}$.
\end{itemize}

Since the points $r_{i-2},r_{i-1}, r_{i},r_{i+1}$ span a $3$-plane,
the $2$-planes $r_{i-1}e_{i-1}e_{i}$ and $r_{i-2}r_{i-1}r_{i+1}$ intersect by a line.

Symmetrically, the $2$-planes $r_{i+1}e_{i+1}e_{i}$ and $r_{i-1}r_{i+1}r_{i+2}$ intersect by a line.

Therefore, the resulting tensegrity is a non-zero tensegrity on the framework $\wu_i(C_B(P))$.

Now let us assume that there is a non-zero tensegrity on $\wu_i(C_B(P))$.
Then we consider a tensegrity $(C_{B,i}^3(P),\tilde w)$, where $C_{B,i}^3(P)$ is
the framed $3$-cycle framework as above;
the self-stress $\tilde w$ is defined by linearity starting from the fact that at edge
$r_{i-1}r_{i+1}$ it coincides with the self-stress at $r_{i-1}r_{i+1}$ for $\wu_i(C_B(P))$.

Similarly, by subtracting $(C_{B,i}^3(P),\tilde w)$ from $(\wu_i(C_B(P)),w)$ and summing
the boundary force vectors at $r_{i-1}$ and $r_{i+1}$ we get a non-zero
tensegrity for $C_B(P)$.
\end{proof}

\subsection{On existence and uniqueness of tensegrities for
frameworks in 3D-general position}
\label{On existence and uniqueness of tensegrities for
frameworks in general position}
Recall that in this paper we work only with connected graphs.
The uniqueness of tensegrities (up to a scalar) can be formulated as follows.

\begin{proposition}\label{non-zero}
All tensegrities on a linearly generic framework
are proportional.
In addition every non-zero tensegrity is everywhere non-zero.
\end{proposition}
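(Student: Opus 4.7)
The strategy is to extract from linear genericity a rigid local equilibrium at every interior vertex (all incident tensions are determined by any one of them, and are either all zero or all non-zero), and then to propagate this along the connected graph to obtain both assertions at once.

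First I would carry out the local linear-algebra step. Fix an interior vertex $v_i$ of valence $k$ with neighbours $v_{j_1},\ldots,v_{j_k}$, and let $u_m := p_i - p_{j_m}$. By Definition~\ref{defn:lingen} the vectors $u_1,\ldots,u_k$ lie in a common $(k-1)$-plane and \emph{every} $(k-1)$-subset of them already spans this plane, hence is linearly independent. Consequently any non-trivial linear relation $\sum_m a_m u_m = 0$ must have all coefficients $a_m$ non-zero, and any two such relations differ only by a global scalar: otherwise a suitable linear combination of them would be a non-trivial relation with a missing coefficient, contradicting the independence of the remaining $k-1$ vectors. The equilibrium condition at $v_i$ is precisely such a relation with coefficients $w_{i,j_m}$, so the local tuple $(w_{i,j_1},\ldots,w_{i,j_k})$ is either identically zero or everywhere non-zero, and in the latter case any one entry determines the other $k-1$.

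Next I would propagate. Assume $w$ is a non-zero self-stress, so $w_{i,j}\ne 0$ for some edge $(v_i;v_j)$; by the local dichotomy all tensions at $v_i$ are non-zero. Applying the same dichotomy at each neighbour and iterating through the connected graph $G$ yields non-vanishing at every edge, which is the second assertion. For the proportionality statement, take two self-stresses $w,w'$; if $w=0$ there is nothing to prove, so assume $w$ is non-zero and therefore everywhere non-zero. Pick any edge $(v_i;v_j)$, set $\lambda := w'_{i,j}/w_{i,j}$, and consider $w'' := w' - \lambda w$. By linearity of the equilibrium conditions $w''$ is again a self-stress, and it vanishes on $(v_i;v_j)$; by the everywhere-non-zero property just proved, $w''$ must vanish identically, whence $w' = \lambda w$.

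I do not anticipate a genuine obstacle; the only delicate point worth flagging is that the argument really uses the full strength of Definition~\ref{defn:lingen}, namely that \emph{every} $(k-1)$-subset of incident edges (not merely some particular one) spans the local $(k-1)$-plane. This is exactly what guarantees both that the local equilibrium relation is unique up to scalar and that none of its coefficients can vanish, and those two facts are what drive the propagation argument.
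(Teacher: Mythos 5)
Your proposal is correct and follows essentially the same route as the paper's (much terser) proof: linear genericity makes the space of equilibrium relations at each interior vertex one-dimensional with all coefficients non-zero, so the local tensions are determined up to a common scalar and are all zero or all non-zero, and connectivity propagates this globally. You simply supply the linear-algebra details and the subtraction argument for proportionality that the paper leaves implicit.
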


\begin{proof}
The proof is straightforward as tensions at every vertex of a linearly generic framework
are defined in the unique way up to a scalar. All stresses at this vertex are either all zero or all non-zero.
\end{proof}

Before to formulate a criterion of existence of a tensegrity we give the following definition.

\begin{definition}\label{def:framed}
Consider a linearly generic framework $G(P)$ and a cycle $C$ in $G$ (without self-intersections).
Furthermore, consider a framed cycle $C_B=(C,B)$. We say that a framed cycle framework
$$
C_B(\tilde P)=
\big((r_{1},\ldots, r_{n}),
(e_{1},\ldots,e_{n})\big)
$$
is
  \emph{associated} to $G(P)$ if
\begin{itemize}
\item $r_i=p_i$ at all corresponding points of $C$ and $G$;
\item for the boundary points we have:
$$
\begin{array}{l}
e_i\in \spann(p_{i},p_{i-1},p_{i+1})\cap
\spann(p_{i},p_{i,1},\ldots, p_{i,k}),
\end{array}
$$
where $v_iv_{i,j}$ correspond to all edges adjacent to $v_i$ except
for the two edges $v_iv_{i-1}$ and $v_iv_{i+1}$, and $p_ip_{i,j}$ are their realizations in $G(P)$.
\item In addition we require that $e_i\ne r_i$ for $i=1,\ldots, n$.
\end{itemize}
\end{definition}

\begin{remark}
Let us note that associated framed cycles are very specific tie-downs introduced in~\cite{White1983}
by N.L.~White and W.~Whiteley. In the original construction of N.L.~White and W.~Whiteley there is no fixed rule
to pick the directions of tie-downs (i.e., framings), whereas in our construction the directions of framings are determined by
 the framework.

Indeed, consider a linearly generic framework $G(P)$ and a cycle $C$ in $G$.
Note that the vertices of an associated framed cycle $C_B$ are the vertices of $G$.
Now the directions of edges $e_ir_i$ are uniquely determined by the framework.
The only freedom $e_i$ can still have is as follows: it can slide along the line
$$
\begin{array}{l}
\spann(p_{i},p_{i-1},p_{i+1})\cap
\spann(p_{i},p_{i,1},\ldots, p_{i,k}).
\end{array}
$$
Here the position of $e_i$ on that line is not important as it does not change the force-loads on the cycle itself.
\end{remark}

The criterion of existence of a tensegrity can be formulated in the following way.

\begin{theorem}\label{cycle-tens}
A linearly generic connected framework admits a non-zero tense\-grity
if and only if all its associated framed cycle frameworks admit a non-zero
tense\-grity.
\end{theorem}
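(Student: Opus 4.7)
The plan is to prove both directions of the equivalence. For the forward direction, given a non-zero global tensegrity $(G(P),w)$ and any associated framed cycle $C_B(\tilde P)$ with $C=(v_1,\ldots,v_n)$, I would simply inherit $w^{(C)}_{i,i+1}:=w_{i,i+1}$ on the cycle edges and derive each framing tension from the full equilibrium at $p_i$. Splitting $\sum_j w_{i,j}(p_i-p_j)=0$ into cycle and non-cycle parts, the non-cycle sum lies in $\spann(p_i,p_{i,1},\ldots,p_{i,k})$ while equalling minus the cycle contribution, which lies in $\spann(p_i,p_{i-1},p_{i+1})$. Linear genericity delivers a clean dimension count (a $2$-plane and a $(d_i-2)$-plane jointly spanning the $(d_i-1)$-plane of all edges at $p_i$), so these two affine subspaces meet in exactly the line $p_i e_i$ that defines $e_i$. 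One can therefore set $w^{(C)}_{i,e_i}(p_i-e_i)$ equal to the non-cycle sum, producing an equilibrium on $C_B(\tilde P)$ that is non-zero by Proposition~\ref{non-zero}.

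For the converse, I would recast the existence of a global self-stress as the exactness of a multiplicative $1$-form on $G$ itself. By Proposition~\ref{non-zero}, the local equilibrium at each vertex $v_i$ has a $1$-dimensional solution space spanned by a vector $s^{(i)}$ whose components are all non-zero. A global self-stress then corresponds to choosing scalars $\alpha_i\in\R\setminus\{0\}$ with $\alpha_i s^{(i)}_{j}=\alpha_j s^{(j)}_{i}$ on every edge $(v_i,v_j)$, equivalently to exactness of the multiplicative $1$-form $\omega(v_i\to v_j):=s^{(j)}_i/s^{(i)}_j$ on $G$. By the $H_1$-reduction in Remark~\ref{rem:h1}, it is enough to verify $\prod_{i=1}^n\omega(v_i\to v_{i+1})=1$ along every simple cycle $C=(v_1,\ldots,v_n)$ of $G$.

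The crux is the matching claim: at every cycle vertex $v_i$, the two cycle components of $s^{(v_i)}$ are proportional to the framed cycle tensions, i.e., there exists $\beta_i\in\R\setminus\{0\}$ with $s^{(v_i)}_{v_{i\pm1}}=\beta_i w^{(C)}_{i,i\pm1}$. In both the full and framed settings, the cycle force $c_-(p_i-p_{i-1})+c_+(p_i-p_{i+1})$ must lie on the one-dimensional intersection line $p_i e_i$: in the full framework because the balancing non-cycle force lies in its own $(d_i-2)$-dimensional span, and in the framed cycle because the only balancing force is along the framing edge $p_i e_i$ itself. Since $p_i-p_{i-1}$ and $p_i-p_{i+1}$ are linearly independent and $e_i-p_i$ has a fixed expansion in this basis, the ratio $c_+/c_-$ is uniquely forced and agrees in both cases. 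Using the symmetry $w^{(C)}_{i+1,i}=w^{(C)}_{i,i+1}$ together with the claim, one obtains $\omega(v_i\to v_{i+1})=\beta_{i+1}/\beta_i$, whence the cyclic product telescopes to $1$ and $\omega$ is exact, producing a non-zero global tensegrity. I expect the main obstacle to be precisely this intersection-line matching, which hinges on the same linear-genericity dimension arithmetic that governs the definition of $e_i$ and underlies the proofs of Proposition~\ref{Predefinition-HF} and Proposition~\ref{wu-property}.
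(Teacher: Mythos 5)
Your proof is correct, and your ``only if'' direction matches the paper's (the paper compresses it to a one-line appeal to Proposition~\ref{non-zero}; you supply the dimension count showing that the resultant of the non-cycle forces at $p_i$ lies on the line $p_ie_i$, which is exactly what makes the induced framing tension well defined). The converse, however, takes a genuinely different route. The paper runs a greedy vertex-by-vertex propagation: it grows a connected set $V'$ on which a stress is already consistently defined, extends it across one new vertex at a time, and invokes the framed-cycle hypothesis only at the moment a new edge closes a cycle back into $V'$, to certify that the two candidate tensions on that edge agree. You instead globalize: by Proposition~\ref{non-zero} the local kernel at each vertex is a line spanned by an everywhere non-zero vector $s^{(i)}$, a global self-stress is precisely a potential for the multiplicative $1$-form $\omega(v_i\to v_j)=s^{(j)}_i/s^{(i)}_j$ on $G$, exactness need only be checked on a generating set of simple cycles of $H_1(G)$, and the framed-cycle hypothesis forces the two cycle components of $s^{(i)}$ to be proportional to the framed-cycle tensions (both being pinned to the direction of $p_ie_i$ by the same intersection argument), so the cyclic product telescopes to $1$. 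This is essentially the philosophy of Section~\ref{sec:ratios} (Theorem~\ref{MainTheorem1}) transplanted from the line graph to $G$ itself; it buys a cleaner localization of where the hypothesis enters and an immediate $H_1$-reduction, whereas the paper's induction buys an explicit construction of the stress without passing through the potential-existence lemma. Two small points you should make explicit: linear genericity guarantees that $e_i$ lies off both lines $p_ip_{i-1}$ and $p_ip_{i+1}$ (otherwise $m-1$ of the $m$ edges at $p_i$ would span too small a plane), so both coefficients in the expansion of $e_i-p_i$ are non-zero and the ratio $c_+/c_-$ is genuinely determined; and $\omega$ is only defined on edges joining vertices of $Z(G)$, whose induced subgraph is still connected since no path between two such vertices can pass through a $1$-valent vertex. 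Neither is a real obstacle.
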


\begin{proof}
Assume that a framework admits a non-zero tensegrity.
Then the associated framed cycle frameworks admit a non-zero tensegrity directly by Proposition~\ref{non-zero}.

Let now all associated framed cycle frameworks of $G(P)$ admit a non-zero tensegrity.
Let us iteratively construct a non-zero tensegrity for $G(P)$.

We start with any vertex of degree greater than 1 and set the stress on
one of its edges to $1$.
Therefore, the stresses for the other edges are defined in the unique way.

Assume now that we have constructed the stresses for the edges adjacent to all vertices of $V'\subset V$.
In addition we assume that every pair of vertices in $V'$ is connected by a path in $G$ within $V'$.

Let us now consider some edge $v'v$ such that $v'\in V'$ and $v\in V\setminus V'$.
If $v\in B$ then there is no equilibrium condition on stresses, we just add $v$ to $V'$.
Let now $v'$ be $k$-valent ($k > 1$) with edges $vv_1,\ldots,vv_k$ adjacent to $v$.
Consider the following two cases for these edges.

{\noindent
  \emph{Case 1:} $v_i\notin V'$. Then the stress at $vv_i$ is not yet defined.
Hence we define it from the equilibrium condition for $v$.
}

{\noindent
  \emph{Case 2:} $v_i\in V'$. Then there exists
an associated framed cycle framework $C_B(\tilde P)$ whose non-boundary vertices all correspond to vertices in $V'\cup \{v\}$ and
that passes through $v$ and $v_i$ via edge $vv_i$.
First of all, it has a non-zero self-stress by the theorem assumption.
Secondly, this self-stress is proportional to the stresses defined on the edges adjacent to $V'$ (since all the vertices but one are in $V'$, and
the equilibrium conditions in $V'$ are fulfilled simultaneously for the self-stress on the cycle $C_B(\tilde P)$ and the partially constructed stress).
So the stress at $vv_i$ defined from $v_i$ before coincides with the stress at $vv_i$ defined by the equilibrium in $v$.
}

Now we add $v$ to $V'$ and continue to the next vertex of $V\setminus V'$.
Note that after adding $v$ to $V'$ all the vertices of the new $V'$
are connected by edge paths of $G$ via vertices of $V'$.

At each step of iteration we add a new vertex and define the stresses on the edges adjacent to it (if they were not defined before)
such that the equilibrium condition is fulfilled.

Since $G$ is connected, the process terminates and we have a tensegrity on $G(P)$ at the end of the process.
\end{proof}

\subsection{Join/intersection condition for the existence of tensegrities}
\label{subsec:projcond}

Finally, we have all tools to formulate geometric conditions for the
existence of non-zero tensegrities for frameworks in 3D-general position in
terms of join/intersection operations and conditions within projective geometry.
Let us first briefly recall the notions of join and intersection.

\subsubsection{Join/intersection operations and relations}

Let us briefly recall the notions of join and intersection within projective
geometry.
First of all the elements of projective geometry on $\R P^d$ are all
the $k$-planes of all possible dimensions $k\le d$.

There are two operations in projective geometry that are called
\emph{join} and \emph{intersection} operations and denoted by $\vee$ and $\wedge$,
respectively.

We will use the ``dimension-operator'' $\dim$ with respect to
\emph{projective} dimension of projective subspaces and refer to the
dimension of \emph{linear} subspaces by $\dim_l$.
Projective subspaces $\pi \subset \R P^d$ of dimension $k$ are represented
by $(k + 1)$\dash dimensional subspaces $U \in \R^{d + 1}$, consequently,
$\dim(\pi) = \dim_l(U) - 1$.

\begin{definition}
  Given projective subspaces $\pi_1,\ldots, \pi_n \subset \R P^d$ of
  arbitrary dimensions.
  The \emph{join} and \emph{intersection} operations for these subspaces
  are respectively as follows:
  $$
  \begin{array}{l}
    \pi_1\vee \ldots \vee\pi_n := \spann(U_1 \cup \ldots \cup U_n);\\
    \pi_1\wedge \ldots \wedge\pi_n := \bigcap\limits_{i=1}^{n}\pi_i.
  \end{array}
  $$
\end{definition}

\begin{remark}
  Note that there is another approach to our problem using
  bracket algebra and Grassmann\dash Cayley algebra.
  It is developed, e.g.,
  in~\cite{Doubilet1974,Li2008,sturmfels-1993,white-1995,White1983}.
  The expressions in bracket algebra are written as polynomials of brackets
  (i.e., minors of a matrix) while the translation of our approach to
  the bracket ring would result in systems of simple brackets equal to
  zero.  We refer an interested reader to the above mentioned papers.
\end{remark}

Finally, let us formulate relations on the subspaces of projective
geometry.

\begin{definition}
  Given projective subspaces $\pi_1,\ldots, \pi_n$. We say that
  $$
  \pi_1\wedge \ldots \wedge\pi_n=\hbox{true},
  $$
  if there exist projective subspaces $\pi_i'$ with $\dim \pi_i'=\dim \pi_i$
  for $i=1,\ldots, n$
  such that
  $$
  \dim(\pi_1\wedge \ldots \wedge\pi_n)>\dim(\pi_1'\wedge \ldots \wedge\pi_n').
  $$
  Otherwise we say that
  $$
  \pi_1\wedge \ldots \wedge\pi_n=\hbox{false}.
  $$
Here we consider the dimension of an empty set to be $-1$.
  \end{definition}

\begin{example}
  Consider three projective  lines $\ell_1,\ell_2,\ell_3$ in a two-dimensional projective space.  Then
  $$
  \ell_1\wedge \ell_2 \wedge \ell_3=\hbox{true},
  $$
  if and only if these three lines have projectively at least one point in
  common.
\end{example}

\subsubsection{Join/intersection conditions for framed cycles}

Let us first start with a join/intersection condition for a framed cycle on
three vertices.

\begin{definition}
Let $C_B(P)$ be a framework of a framed cycle in 3D-general position
$$
\big((r_1,r_2,r_3), (e_1,e_2,e_3) \big).
$$
  Then the \emph{join/intersection condition} for $C$ is
$$
r_1e_1\wedge r_2e_2\wedge r_3e_3= \hbox{true}.
$$
\end{definition}

\begin{figure}[h]
  \centerline{
  \begin{overpic}[width=.32\textwidth]{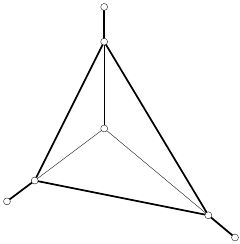}
    \put(12,18){$r_1$}
    \put(80,5){$r_2$}
    \put(46,82){$r_3$}
    \put(5,11){$e_1$}
    \put(99,3){$e_2$}
    \put(46,95){$e_3$}
  \end{overpic}
}
  \caption{
    A framed cycle consisting of a triangle $r_1, r_2, r_3$ with external
    forces $w_i (e_i - r_i)$ is a tensegrity if and only if the three
    lines $r_i e_i$ meet in a point.
  }
  \label{fig:triangle2}
\end{figure}

Let us now expand the notion of join/intersection condition to
framed cycle frameworks of arbitrary length.

\begin{definition}\label{GC-condition-cycle-def}
Let $C_B$ be a framed cycle of length $n\ge 3$, and let $C_B(P)$ be its framework in 3D-general position.
  Then the \emph{join/intersection condition} for $C_B(P)$ is as follows
$$
r_1e_1^{(n-3)}\wedge r_2e_2\wedge r_3e_3^{(n-3)}=\hbox{true},
$$
where $e_1^{(n-3)}$ is defined recursively by
$$
\begin{array}{l}
e_1^{(0)}=e_1;\\
e_1^{(k)}=e_{n-k+1}e_1^{(k-1)} \wedge \big(r_{n-k}\vee r_1\vee r_{2}\big),
\end{array}
$$
and $e_3^{(n-3)}$ is defined recursively by
$$
\begin{array}{l}
e_3^{(0)}=e_n;\\
e_3^{(k)}=e_{n-k}e_{3}^{(k-1)} \wedge \big(r_{n-k-1} \vee r_{n-k}\vee r_1\big).
\end{array}
$$
\end{definition}

\begin{remark}
For simplicity here and below we write $uw$ instead of $u\vee v$.
\end{remark}

\begin{proposition}\label{GC-condition-cycle}
A framed cycle framework $C_B(P)$ in 3D-general flat position has a non-zero tensegrity
if and only if
$C_B(P)$ fulfills the join/intersection condition.
\end{proposition}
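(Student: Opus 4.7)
My plan is to prove the proposition by induction on the cycle length $n \ge 3$.

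For the base case $n = 3$, the recursive formulas of Definition~\ref{GC-condition-cycle-def} degenerate to $e_1^{(0)} = e_1$ and $e_3^{(0)} = e_3$, so the join/intersection condition asserts the concurrence of the three boundary lines $r_1 e_1, r_2 e_2, r_3 e_3$ (Figure~\ref{fig:triangle2}). This is the classical ``three-forces'' equilibrium criterion: three external forces applied at the vertices of a triangle along prescribed directions can be balanced by internal cycle tensions if and only if their lines of action meet in a common point, which is a standard consequence of the triangle-of-forces and moment-balance argument in statics.

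For the inductive step, I assume the proposition for all framed cycles of length strictly less than $n$, and apply the $\wu_n$-surgery to produce $\wu_n(C_B(P))$, which remains in 3D-general flat position and has length $n - 1$. By Proposition~\ref{wu-property}, $C_B(P)$ admits a non-zero tensegrity if and only if $\wu_n(C_B(P))$ does, and by the inductive hypothesis the latter is equivalent to the join/intersection condition for $\wu_n(C_B(P))$. It therefore suffices to show that the join/intersection conditions for $\wu_n(C_B(P))$ and $C_B(P)$ coincide. By construction the surgery replaces the boundaries at $r_1$ and $r_{n-1}$ by exactly $\hat e_1 = e_1^{(1)}$ and $\hat e_{n-1} = e_3^{(1)}$, matching the first step of the recursive formulas. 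Denoting by $\tilde e_1^{(j)}, \tilde e_3^{(j)}$ the auxiliary points produced by applying Definition~\ref{GC-condition-cycle-def} to the reduced cycle, I would verify by a sub-induction on $j$ that the lines $r_1 \tilde e_1^{(j)}$ and $r_1 e_1^{(j+1)}$ coincide, together with the analogous identification $r_3 \tilde e_3^{(j)} = r_3 e_3^{(j+1)}$, so that after $n - 4$ iterations the two conditions agree.

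The main obstacle I anticipate is this sub-inductive matching, and in particular the identification for the $e_1$-sequence: although the $e_3$-side follows quickly since its recursion involves only the unchanged original boundaries $e_{n-j-1}$, the $e_1$-side is more delicate because the iterated surgery uses the updated neighbour $\tilde e_3^{(j)} = e_3^{(j+1)}$ while the recursive formula for $e_1^{(j+1)}$ uses the original $e_{n-j}$. These two distinct prescriptions produce generically distinct candidate points in the common target $2$-plane $r_{n-j-1} \vee r_1 \vee r_2$, yet the join/intersection condition depends only on the line through $r_1$ that they determine. I would establish the coincidence of these lines either by a direct projective-incidence argument exploiting that both candidate points lie in a common auxiliary $2$-plane meeting the target $2$-plane in a line through $r_1$ (as a consequence of the 3D-general flat position hypothesis on the original cycle), or by invoking Proposition~\ref{non-zero} to identify both lines as the unique projective direction of the effective external force at $r_1$ after iteratively absorbing the removed vertices via the equilibrium equations. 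Carrying out this identification cleanly, together with handling any potentially degenerate intermediate configurations that may arise along the surgery chain, constitutes the technical heart of the argument.
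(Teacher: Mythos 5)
Your overall strategy---induct on the length $n$, peel off the last vertex with a $\wu_n$-surgery, invoke Proposition~\ref{wu-property}, and bottom out at the classical concurrence criterion for a framed triangle---is exactly the route the paper takes; the paper's proof consists of essentially these three sentences and treats the identification of Definition~\ref{GC-condition-cycle-def} with the iterated surgery $\wu_4(\ldots\wu_n(C_B(P))\ldots)$ as immediate. So up to and including the reduction via Proposition~\ref{wu-property} you and the paper agree, and your observation that the $e_3$-recursion tracks the surgery exactly is correct.

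The gap is the step you yourself flag as the ``technical heart,'' and it is more serious than you suggest: the line-coincidence you propose to prove fails in general. After $\wu_n$ the boundary point at $r_{n-1}$ has become $\hat e_{n-1}=e_3^{(1)}$, so the second surgery produces at $r_1$ the point $(\hat e_{n-1}\vee\hat e_1)\wedge(r_{n-2}\vee r_1\vee r_2)$, whereas the recursion of Definition~\ref{GC-condition-cycle-def} produces $e_1^{(2)}=(e_{n-1}\vee\hat e_1)\wedge(r_{n-2}\vee r_1\vee r_2)$ with the \emph{original} point $e_{n-1}$. Both candidate points lie on the line $\spann(e_1,e_n,e_{n-1})\wedge(r_{n-2}\vee r_1\vee r_2)$, and the two lines through $r_1$ that they span coincide precisely when $r_1\in\spann(e_1,e_n,e_{n-1})$---a condition not implied by 3D-general flat position; an explicit $5$-cycle in $\R^3$ already gives two distinct lines (and for $d\ge4$ the line $e_{n-1}\vee e_1^{(1)}$ need not meet $r_{n-2}\vee r_1\vee r_2$ at all, while the surgery version always does). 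The statics behind Proposition~\ref{wu-property} unambiguously yields the recursion with the updated point $e_3^{(k-1)}$ in place of $e_{n-k+1}$; the two prescriptions agree for $k=1$, hence for $n=4$ (the only case written out in Example~\ref{ex:gc}), but diverge from $k=2$ on. Consequently your sub-induction closes only if run against the surgery-defined recursion, and neither of your two proposed repairs (the incidence argument or the appeal to Proposition~\ref{non-zero}) can identify it with the literal recursion of Definition~\ref{GC-condition-cycle-def}---nor does the paper supply such an identification. The honest completion of the argument is to prove the proposition for the surgery-defined condition and then either amend the stated recursion accordingly or give a separate reason why the literal one cuts out the same locus.
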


\begin{proof}
  The condition is written by iteratively application of $\wu$\dash
  surgeries to the last vertex of $C$, reducing $C_B$ to a triangular
  framed cycle in general flat position.  Namely the resulting flat cycle
  is
  $$
    \wu_{4}(\ldots \wu_n(C_B(P))\ldots).
  $$

  The existence of a non-zero tensegrity is equivalent to the existence of
  a non-zero tensegrity after $\wu$-surgeries by
  Proposition~\ref{wu-property}.

  So the statement of proposition is reduced to triangular cycles.
  The statement for a triangular cycle (which has to be planar) is
  classical (see e.g.,~\cite{Karpenkov2019}).
\end{proof}

Let us write explicitly the join/intersection conditions for cycles on~3
and~4 vertices.

\begin{example}
  \label{ex:gc}
If $n=3$, then we have
$$
r_1e_1\wedge r_2e_2\wedge r_3e_3=\hbox{true}.
$$
If $n=4$, then we have (see Figure~\ref{fig:n4})
$$
\big[r_1\vee\big(e_4e_1 \wedge (r_{3}\vee r_1 \vee r_2)\big)\big]
\wedge r_2e_2
\wedge
\big[r_3\vee\big(e_3e_4 \wedge (r_{2}\vee r_3 \vee r_1)\big)\big]
=\hbox{true}.
$$
\end{example}

\begin{figure}[h]
  \centerline{
  \begin{overpic}[width=.62\textwidth]{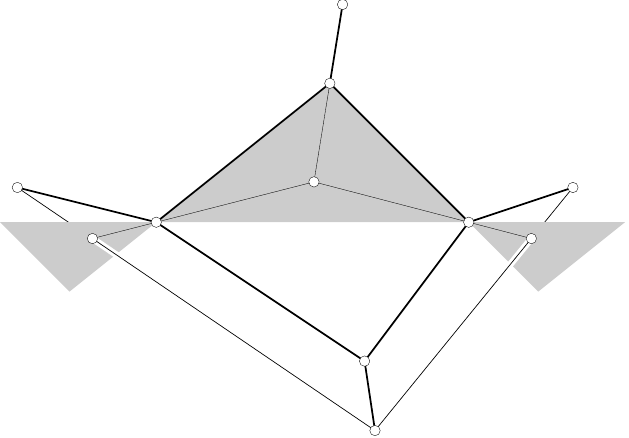}
    \put(22,37){$r_1$}
    \put(47,57){$r_2$}
    \put(73,37){$r_3$}
    \put(54.5,15){$r_4$}
    \put(1,42){$e_1$}
    \put(48,68){$e_2$}
    \put(89,42){$e_3$}
    \put(54,-1){$e_4$}
    \put(10,29){$\hat e_1$}
    \put(86,29){$\hat e_3$}
  \end{overpic}
}
  \caption{Illustration of Example~\ref{ex:gc} for $n = 4$.
  }
  \label{fig:n4}
\end{figure}

\subsubsection{Join/intersection criteria for tensegrities in 3D-general position}

The following theorem and its proof is the recipe to write the
join/intersection criteria for tensegrities in 3D-general position.

\begin{theorem}\label{GC-conditions}
  The framework $G(P)$ in 3D-general position admits a non-trivial
  tensegrity, if and only if all the join/intersection conditions for all
  its associated framed cycle frameworks are fulfilled.
\end{theorem}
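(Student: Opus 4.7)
The plan is to chain the two results already at hand: Theorem~\ref{cycle-tens} (which reduces tensegrity existence on $G(P)$ to tensegrity existence on every associated framed cycle framework, under the hypothesis of linear genericity) and Proposition~\ref{GC-condition-cycle} (which characterizes non-zero tensegrities on a framed cycle framework in 3D-general flat position by its join/intersection condition). Since 3D-general position implies linear genericity, the first step applies immediately; the work is to verify that it also implies 3D-general flat position for each associated framed cycle framework, so that the second step applies as well.

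To verify this inheritance, I would check the two clauses of Definition~\ref{def-3d-g-f} in turn. The clause ``no four cycle vertices in a common 2-plane'' is immediate from the second clause of Definition~\ref{def:genpos}, since the cycle $C$ in the associated framed cycle is a subgraph of $G$. For linear genericity of $C_B(\tilde P)$, only the cycle vertices $r_i=p_i$ need checking; each has three incident edges: the two cycle edges and the framing edge $r_ie_i$. Definition~\ref{def:framed} places $e_i$ on the intersection line of the two $2$-planes $\spann(p_i,p_{i-1},p_{i+1})$ and $\spann(p_i,p_{i,1},\ldots,p_{i,k})$. These $2$-planes are distinct by linear genericity of $G(P)$ (if they coincided, the $k$ edges at $p_i$ would fail to span a $(k-1)$-plane), hence their intersection is a line through $p_i$ transverse to both cycle edges. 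This gives the required linear genericity at $r_i$.

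One subtlety worth flagging is that $e_i$ is only defined up to sliding along this intersection line (see the remark after Definition~\ref{def:framed}), so the join/intersection condition must be well-defined on $G(P)$ alone. This holds because Definition~\ref{GC-condition-cycle-def} depends only on the cycle points $r_i$ and on the lines $r_ie_i$, all of which are intrinsic to $G(P)$ and unaffected by the sliding.

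With these points in place, the proof is a composition of equivalences: $G(P)$ admits a non-zero tensegrity iff (by Theorem~\ref{cycle-tens}) every associated framed cycle framework admits a non-zero tensegrity iff (by Proposition~\ref{GC-condition-cycle} applied to each, using the inheritance above) every associated framed cycle framework satisfies its join/intersection condition. No new idea is needed; the only obstacle is the bookkeeping to check that 3D-general position passes to 3D-general flat position on every associated framed cycle, as outlined above.
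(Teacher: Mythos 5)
Your proposal is correct and follows essentially the same route as the paper: the proof there is exactly the composition of Theorem~\ref{cycle-tens} with Proposition~\ref{GC-condition-cycle}, supplemented only by an explicit join/intersection formula for constructing the framing points $\tilde e_i$. Your additional checks (that 3D-general position passes to 3D-general flat position on each associated framed cycle, and that the condition is independent of sliding $e_i$ along its line) are details the paper leaves implicit, and they are verified correctly.
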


\begin{proof}
  The join/intersection conditions for $G(P)$ are written according to
  Definition~\ref{GC-condition-cycle-def}.
  Due to Theorem~\ref{cycle-tens} and Proposition~\ref{GC-condition-cycle}
  they are equivalent to the existence of a non-zero tensegrity on $G(P)$.

  It remains to add the following detail to the above construction.
  In order to generate the boundary $\tilde e_i$ of an associated framed
  cycle framework $C_B(\tilde P)$, one should take the intersection of the
  span of two edges in the cycle passing through $\tilde r_i=r_i$
  (namely $r_{i-1}r_i$ and $r_ir_{i+1}$) and the span of all other edges
  adjacent to $r_i$, say $r_ir_{i,1},\ldots, r_ir_{i,k}$.
  Let us denote the resulting line by $\ell$.
  In terms of join/intersection operators $\ell$ is written as
  $$
    \ell=\big(r_i\vee r_{i,1}\vee \ldots \vee r_{i,k}\big)\wedge\big(r_{i-1}\vee
    r_i \vee r_{i+1}\big).
  $$
  Finally, we pick up a point $G(P)(b_i)$ on $\ell$ distinct to $r_i$.
  For instance, set
  \begin{equation*}
    \tilde e_i=\ell \wedge \big(r_{i,1}\vee \ldots \vee r_{i,k}\big).
    \qedhere
  \end{equation*}
\end{proof}

\begin{remark}
 In analogy to Remark~\ref{rem:h1} it is sufficient to check the
  criterion only for generator loops of the first homology group $H_1(G)$
  of the graph (if we consider the graph as topological space), because
  all other loops can be decomposed by those.
  These conditions for different generators of  $H_1(G)$ may still coincide.
  Some of the conditions will correspond to different strata.
\end{remark}

\section{Tensegrities and discrete harmonic maps}
\label{sec:applications}

In this section we relate ten\-se\-grities to the notion of discrete harmonic
functions and demonstrate an alternative way to obtain tensegrities with
just positive tensions.

The discrete Laplace operator (the graph Laplacian) acts on
maps $f: G \to \R^d$ defined on arbitrary graphs $G$ by with real valued
weights $w_{i, j} \in \R$
$$
(\Delta f)(v_i) := \sum_{v_j \sim v_i} w_{i, j} (f(v_i) - f(v_j)),
$$
where we sum over neighboring vertices $v_j$ of $v_i$.
This \emph{discrete Laplace operator} has been used in several
applications of geometry processing as well as in discrete complex
analysis and discrete minimal surface theory
(see e.g., \cite{bobenko+2008,botsch+2010}).
The weights $w_{i, j} \in \R$ are chosen depending on the application.
Prominent examples are the cotangent-weights or the area of Voronoi cells
around the vertex $v_i$. Furthermore, the choice of the weights implies
which properties of the discrete Laplace operator ``inherits'' from its
smooth counterpart~\cite{wardetzky+2007}.

\begin{definition}
A function $f: G \to \R^d$ is called \emph{discrete harmonic} if
$$
(\Delta f)(v_i) = 0
$$
for all vertices $v_i \in Z(G)$.
\end{definition}

A real valued discrete
harmonic function over some rectangular subgrid of the $\Z^2$ lattice is
illustrated by Figure~\ref{fig:example1}.

In the setting of tensegrities the function $f$ describes the coordinates
of the position of the vertices in space and the weight assignment $w_{i,
j}$ represents the stress at each edge $(v_i; v_j)$. Consequently, we will
allow positive and negative weights for tensile and compression forces.
Therefore, it follows from the definition of tensegrities
(Definition~\ref{defR}) that they can be seen as zeroes of the discrete
Laplace operator for maps defined on the vertices of a graph.  In this
sense tensegrities are harmonic maps with respect to the discrete Laplace
operator.

\begin{figure}[t]
  \centerline{
  \begin{overpic}[width=.28\textwidth]{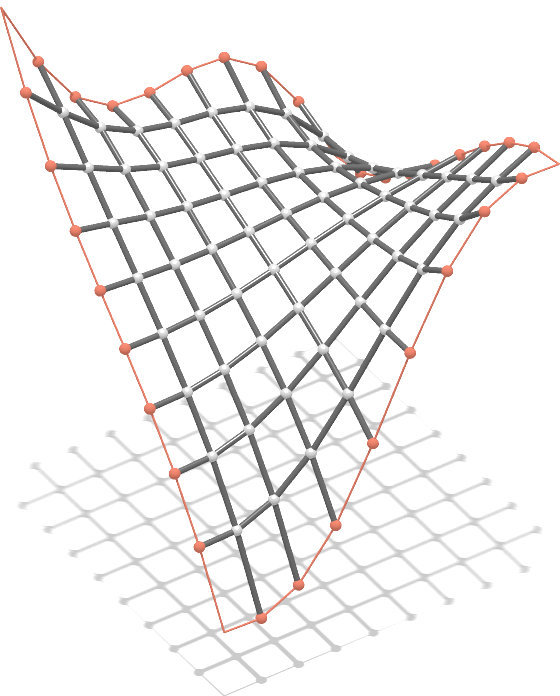}
  \end{overpic}%
}
  \caption{Illustration of a discrete harmonic real valued function as
  solution of a Dirichlet boundary value problem. The discrete harmonic
  function $f$ is defined over a rectangular patch $U$ of the $\Z^2$
  lattice $f: \Z^2 \supset U \to \R$ with weights $w = 1$ at each edge.
  The values of $f$ can be computed by minimizing an energy (cf.\
  Prop.~\ref{prop:energy}).
  }
  \label{fig:example1}
\end{figure}

\begin{proposition}
  \label{prop:energy}
  Tensegrities with arbitrary combinatorics and with only positive
  tensions $w_{i, j} > 0$ can be obtained as the minimum of the discrete
  Dirichlet energy
  \begin{equation}
    \label{eq:energy}
    \sum_{v_i \in G} \sum_{v_j \sim v_i} w_{i, j} \|p_i - p_j\|^2,
  \end{equation}
  viewing the coordinates of the vertices $p_i$ as variables.
\end{proposition}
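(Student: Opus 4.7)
The plan is a first-variation computation combined with a convexity argument. Treat the coordinates $p_i$ of the non-boundary vertices $v_i\in Z(G)$ as free variables while prescribing the positions of boundary vertices $v_i\in B(G)$ as Dirichlet data, then compute $\nabla_{p_k} E$ and compare with Definition~\ref{defR}.

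For the first variation, note that because $w_{i,j}=w_{j,i}$ every edge $(v_i;v_j)$ contributes the term $w_{i,j}\|p_i-p_j\|^2$ twice to the double sum in Equation~\eqref{eq:energy} (once as $i,j$ and once as $j,i$). Differentiating with respect to $p_k$ with $v_k\in Z(G)$ and collecting contributions from both copies yields
\[
  \frac{\partial E}{\partial p_k}
  \;=\; 4\sum_{v_j \sim v_k} w_{k,j}\,(p_k - p_j).
\]
This gradient vanishes at every $v_k\in Z(G)$ precisely when $(G(P),w)$ satisfies the equilibrium condition of Definition~\ref{defR}, that is, precisely when it is a tensegrity.

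For the second variation, observe that the Hessian of $E$ equals $2L\otimes \mathrm{Id}_d$, where $L$ is the weighted graph Laplacian with off-diagonal entries $-2w_{i,j}$ and diagonal entries $2\sum_{v_j\sim v_i} w_{i,j}$. Since every $w_{i,j}>0$, the matrix $L$ is positive semidefinite with kernel spanned by the constant function on each connected component. Fixing the boundary positions (or pinning at least one vertex per component) removes this kernel, so $E$ becomes strictly convex on the admissible configurations and its unique critical point is the global minimum; that minimum is characterised by the equilibrium equation derived above.

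The only subtle point, which I expect to be the main item requiring care, is the translation-invariance of $E$ when $B(G)=\emptyset$: in that case $E$ is trivially minimised by the degenerate configuration $p_1=\cdots=p_n$, giving the zero tensegrity. Thus the proposition must be interpreted as minimisation subject to Dirichlet boundary conditions on $B(G)$ (or an equivalent pinning that breaks translations), which matches the convention in Definition~\ref{defR} that equilibrium is imposed only at interior vertices. Once this normalisation is in place, the proof collapses to the two routine computations above.
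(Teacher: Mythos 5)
Your proposal is correct and follows essentially the same route as the paper's (very terse) justification: critical points of the Dirichlet energy are exactly the solutions of the equilibrium system, and boundedness below together with fixed boundary data yields a minimiser. Your additional details --- the explicit gradient computation, the positive semidefiniteness of the weighted Laplacian, and the caveat about translation invariance when $B(G)=\emptyset$ --- are all sound and in fact make the argument more complete than the one in the text.
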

This proposition holds since the energy is bounded below and
critical points are solutions of the linear system
\begin{equation}
  \label{eq:linearsystem}
  \sum_{v_j \sim v_i} w_{i, j} (p_i - p_j) = 0,
\end{equation}
for all $i$. So if the number of vertices is big enough and the boundary
vertices are fixed, the minimum is unique. Therefore, a tensegrity with
just positive tensions can be interpreted as critical point of an energy.

\begin{example}
  Let us consider a rectangular patch $U$ and let us further fix the
  values on the boundary of $U$. We are looking for a harmonic function
  $f : U \subset \Z^2 \to \R$ with respect to the discrete Laplacian
  with constant positive weights that solves this Dirichlet problem.
  According to Proposition~\ref{prop:energy} we find the solution by
  minimizing
  $$
  \sum_{v_i \in U} \sum_{v_j \sim v_i} w_{i, j} \|f_i - f_j\|^2,
  $$
  where $f_i$ are considered as variables of this energy function.
  The values $f_i$ which belong to the minimum are the values of the
  harmonic function $f$ at $v_i$.
  We illustrate the graph $(v_i, f_i) \in \R^2 \times \R$ of a harmonic
  function $f$ in Figure~\ref{fig:example1}.
\end{example}

\begin{example}
  \label{ex:disc}
  For the combinatorics of any cell decomposition of a disc we obtain a
  tensegrity with everywhere unit tensions by fixing the
  positions of the boundary vertices and minimizing the quadratic energy
  in Equation~\eqref{eq:energy}. The tensegrity is the solution to the
  linear system~\eqref{eq:linearsystem}. An illustration of such a cell
  decomposition can be found in Figure~\ref{fig:examples} (left).
  To check whether this framework is a tensegrity with the machinery
  provided by Section~\ref{sec:ratios} or Section~\ref{sec:gca} requires
  to check the ``ratio'' condition or the ``join/intersection'' condition
  for a set of cycles that generates the first homology group $H_1(G)$.
\end{example}

\begin{example}
  \label{ex:strip}
  Figure~\ref{fig:examples} (right) illustrates a twisted strip
  represented by a net with regular quadrilateral combinatorics which is
  attached to two interlinked circles.
  The topology or the combinatorics of the graph does not play any role in
  the analysis of a framework whether it is a tensegrity. Neither, the
  ``global'' topology nor combinatorics of the framework is of importance
  in the Equations~\eqref{eq:energy} and~\eqref{eq:linearsystem}, just the
  local combinatorics of the vertex stars.
\end{example}

\begin{figure}[h]
  \hfill
  \begin{overpic}[width=.42\textwidth]{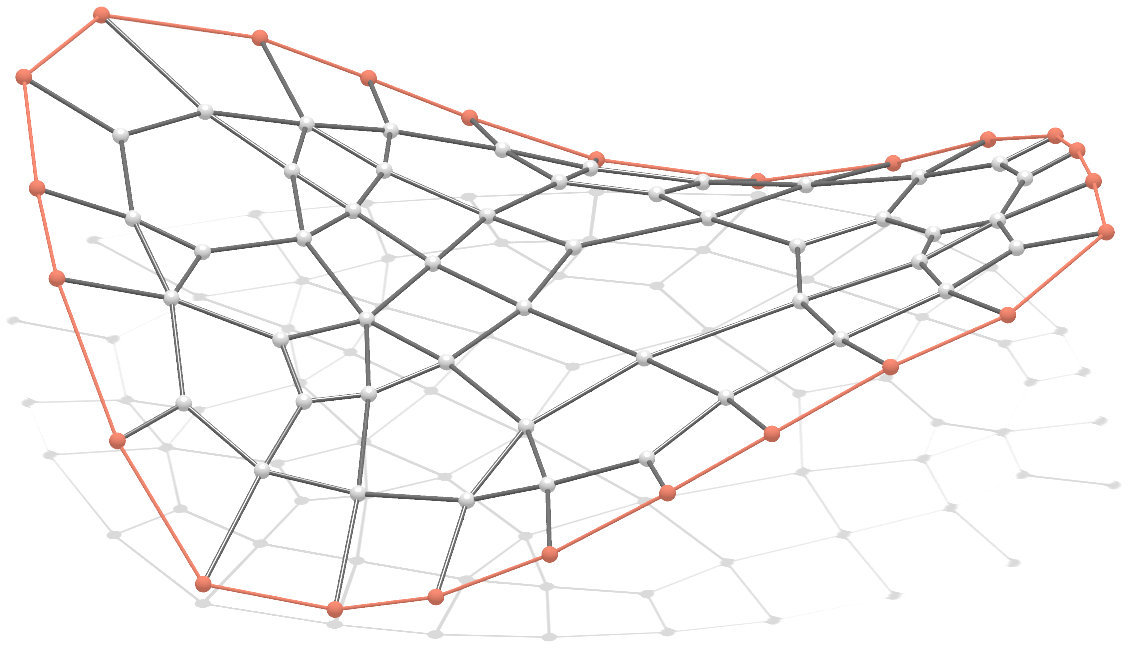}
  \end{overpic}%
  \hfill
  \begin{overpic}[width=.42\textwidth]{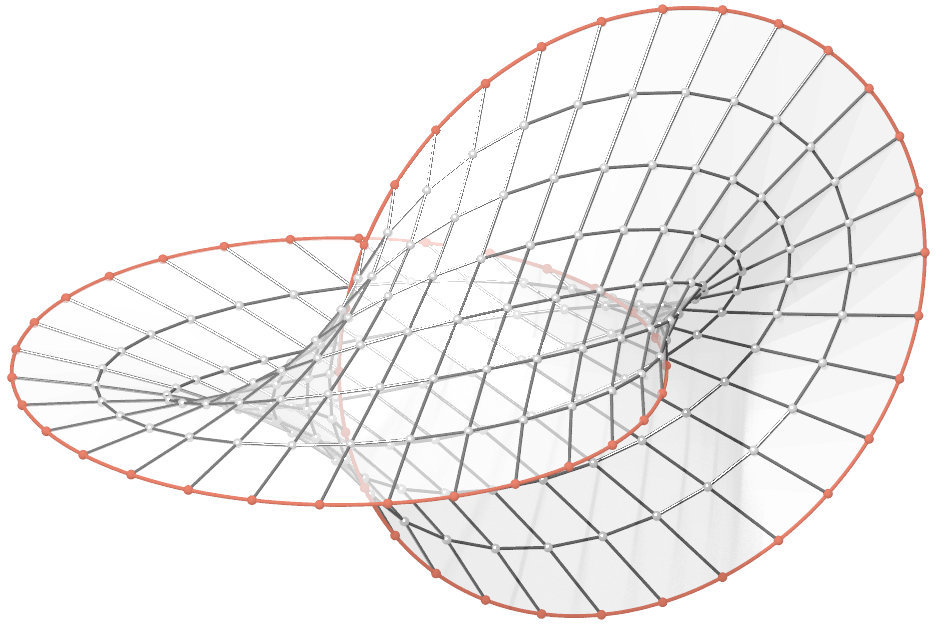}
  \end{overpic}
  \hfill{}
  \caption{
  \emph{Left}: A tensegrity with the combinatorics of an arbitrary cell
  decomposition of a disc (cf.\ Example~\ref{ex:disc}).
    \emph{Right}: Two circles are the boundaries of a tensegrity with
  regular quadrilateral combinatorics (cf.\ Example~\ref{ex:strip}).
  }
  \label{fig:examples}
\end{figure}

\section{Examples}
\label{sec:octahedron}

\begin{figure}[h]
  \hfill
  \begin{overpic}[width=.30\textwidth]{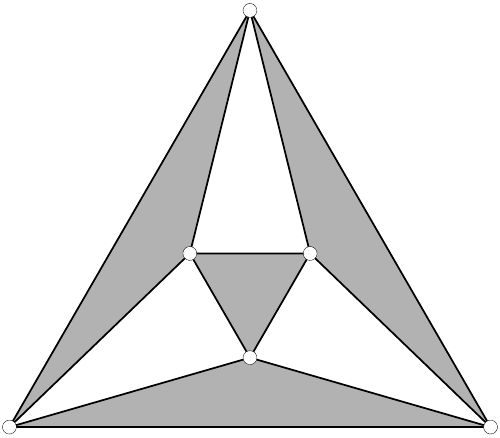}
    \put(64,36){\small$a$}
    \put(32,36){\small$b$}
    \put(47,6){\small$c$}
    \put(0,-6){\small$a'$}
    \put(101,0){\small$b'$}
    \put(52,83){\small$c'$}
  \end{overpic}%
  \hfill
  \begin{overpic}[width=.55\textwidth]{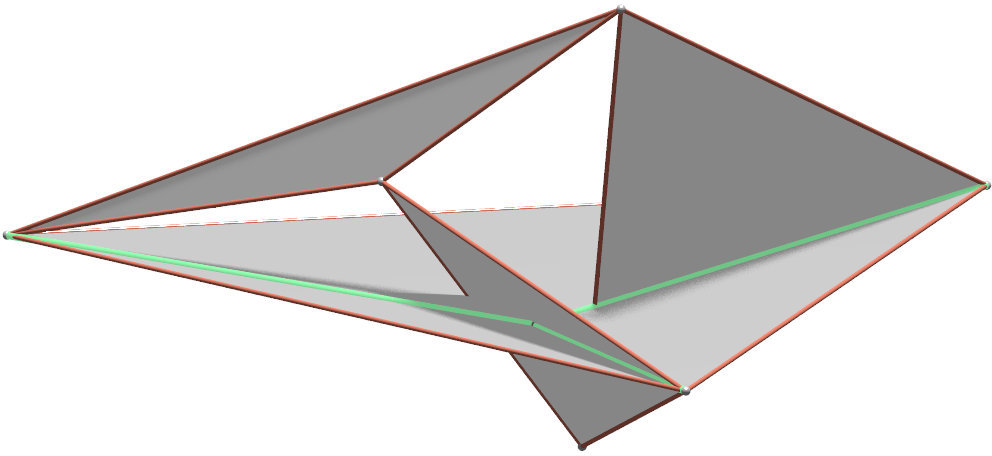}
    \put(-1,18){\small$a$}
    \put(71,3){\small$b$}
    \put(98,23){\small$c$}
    \put(53,-2){\small$a'$}
    \put(64,45){\small$b'$}
    \put(42,26){\small$c'$}
  \end{overpic}%
  \hfill{}
  \caption{\emph{Left}: The combinatorics of an octahedron.
  \emph{Right}: An octahedron in $\R^3$. Its edges form a tensegrity if
  and only if any four alternate face planes, i.e., four $2$-planes of the
  configurational type $abc$, $ab'c'$, $a'bc'$, $a'b'c$, are concurrent in
  a point. The intersection lines of the first $2$-plane with the three
  latter $2$-planes are illustrated by the green lines. See also
  Proposition~\ref{last}.
  }
  \label{fig:octahedron}
\end{figure}

\begin{figure}[h]
  \hfill
  \begin{overpic}[width=.30\textwidth]{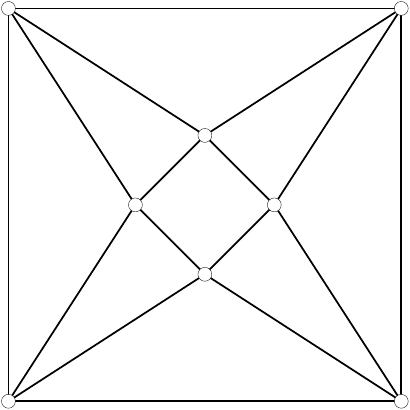}
    \put(0,-5){\small$a$}
    \put(101,0){\small$b$}
    \put(100,100){\small$c$}
    \put(0,100){\small$d$}
    \put(46,24){\small$a'$}
    \put(70,47){\small$b'$}
    \put(49,70){\small$c'$}
    \put(24,46){\small$d'$}
  \end{overpic}%
  \hfill
  \begin{overpic}[width=.55\textwidth]{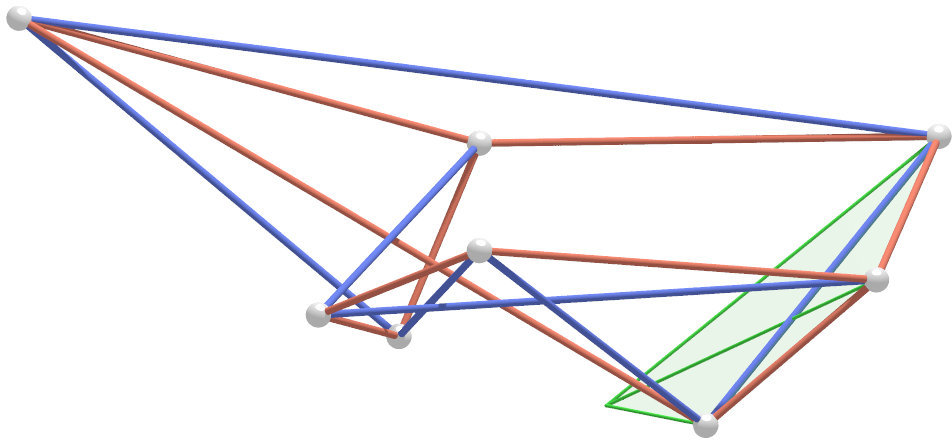}
    \put(43.2,8){\small$a$}
    \put(-2,42){\small$b$}
    \put(76,-1){\small$c$}
    \put(50,22){\small$d$}
    \put(50,32.7){\small$a'$}
    \put(98,34){\small$b'$}
    \put(94,15){\small$c'$}
    \put(28,11){\small$d'$}
  \end{overpic}%
  \hfill{}
  \caption{\emph{Left}: The combinatorics of a four\dash sided
  antiprism.
  \emph{Right}: A tensegrity in $\R^3$ with the combinatorics of a
  four\dash sided antiprism. The rods (= edges with positive weights) are
  red, the cables (= edges with negative weights) are blue.
  For the framework to be a tensegrity we have to check cycles with three
  and four vertices (see Example~\ref{ex:antiprism}).  For example the
  condition on the cycle $cc'b'$ is that the $2$-planes
  $cc'b'$, $bcd$, $a'bb'$, $c'dd'$, must be concurrent. The intersection
  lines of the first $2$-plane with the three latter $2$-planes are
  illustrated by the green lines.
  }
  \label{fig:antiprism}
\end{figure}

We conclude the paper with a brief description of two non-trivial
three\dash dimensional examples.
In fact however, our methods can be applied to all linearly generic
graphs (Def.~\ref{defn:lingen}), or in 3D-general position
(Def.~\ref{def:genpos}), respectively.
For graphs where the first homology group $H_1(G)$ can be generated from
cycles of length three or four, the conditions to check are written out in
Example~\ref{ex:gc} explicitly.

\begin{proposition}\label{last}
  An octahedral framework $(a,b,c,a',b',c')$ in $\R^3$ is a tensegrity if
  and only if four \emph{alternate}, i.e., face planes in a combinatorial
  configuration like $abc$, $ab'c'$, $a'bc'$, $a'b'c$, are concurrent in a
  point (see Figure~\ref{fig:octahedron} and cf.~\cite{White1987}).
\end{proposition}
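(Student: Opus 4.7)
The plan is to apply Theorem~\ref{GC-conditions} to a single triangular face of the octahedron and unpack what the resulting join/intersection condition says. Generically an octahedral framework is in 3D-general position (Def.~\ref{def:genpos}): every vertex is $4$-valent with its four incident edges spanning $\R^3$, and every $4$-tuple of vertices in any cycle spans a $3$-plane. By Remark~\ref{rem:h1} it suffices to check the join/intersection conditions on a set of cycles generating $H_1(G)$, for which the eight triangular faces are convenient (with one global relation coming from the $2$-sphere structure).

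First I would fix the face cycle $abc$ and compute the boundary points $e_a,e_b,e_c$ of its associated framed cycle using Definition~\ref{def:framed}. At vertex $a$ the two cycle edges are $ab$ and $ac$, whereas the two edges outside the cycle are $ab'$ and $ac'$, so $e_a$ lies on the line $\spann(a,b,c)\wedge\spann(a,b',c')$ through $a$. The analogous computations at $b$ and $c$ give
\[
  ae_a=\spann(a,b,c)\wedge\spann(a,b',c'),\quad
  be_b=\spann(a,b,c)\wedge\spann(a',b,c'),\quad
  ce_c=\spann(a,b,c)\wedge\spann(a',b',c),
\]
so all three lines live in the common face plane $\spann(a,b,c)$. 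The join/intersection condition of Definition~\ref{GC-condition-cycle-def} for this $3$-cycle reads $ae_a\wedge be_b\wedge ce_c=\hbox{true}$, and three coplanar lines meet in a common point precisely when the four planes $\spann(a,b,c)$, $\spann(a,b',c')$, $\spann(a',b,c')$, $\spann(a',b',c)$ have a nonempty common intersection, which is exactly the concurrence of the four alternate face planes asserted by the proposition.

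The main obstacle I foresee is justifying that a single face suffices. Repeating the computation on any of the faces $abc$, $ab'c'$, $a'bc'$, $a'b'c$ produces the \emph{same} concurrence, whereas applying it to any face of the complementary class (e.g.\ $a'b'c'$, $ab'c$, $a'bc$, $abc'$) produces the concurrence of the \emph{other} quadruple of alternate face planes. So a priori Theorem~\ref{GC-conditions} delivers two geometric conditions. The tensegrity space of a generic octahedral framework in $\R^3$ is at most one-dimensional, however, since the rigidity matrix has generic rank $3\cdot 6-6=12=|E|$, so the existence of a non-zero self-stress is governed by a single scalar condition on the point configuration. This forces the two alternate-face concurrences to be equivalent — a classical Möbius-pair statement about the two tetrahedra inscribed in the octahedron — and thus any one of them suffices, which is the symmetric phrasing of the proposition.
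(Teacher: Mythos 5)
Your argument is essentially the paper's second proof: you extract the same three lines $abc\wedge ab'c'$, $abc\wedge a'bc'$, $abc\wedge a'b'c$ from the associated framed cycle of the face $abc$ and read off the concurrence of the four alternate face planes from the join/intersection condition. The only difference is your closing discussion of why a single face suffices---the paper relegates this to a remark asserting that the conditions for all triangles are equivalent---and there your rank count is not by itself conclusive, since a codimension-one determinantal locus can be reducible with the two classes of faces landing on different components (a possibility the paper itself flags in its remark on $H_1$ generators), so the equivalence really rests on the classical M\"obius-pair fact you invoke rather than on the dimension count.
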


Let us give two new proofs of this classical statement in terms of
multiplicative $1$-forms and in terms of join/intersection operations.

\begin{proof}[Proof 1 (via multiplicative $1$-forms)]
  Let us consider the cycle with three vertices $(a, b, c)$.
  The necessary condition for that cycle to be part of a tensegrity is
  that the product of the three values of the $1$-form multiply to $1$
  which is equivalent to Ceva's theorem (see
  Section~\ref{subsec:specialcase} for $n = 3$).
  Consequently, the three lines
  \begin{align*}
    \spann(a, b', c') &\cap \spann(a, b, c),
    \\
    \spann(a', b, c') &\cap \spann(a, b, c),
    \\
    \spann(a', b', c) &\cap \spann(a, b, c),
  \end{align*}
  must intersect in one point and therefore all four $2$-planes intersect in
  one point.
\end{proof}

\begin{proof}[Proof 2 (within join/intersection relations)]
  Let
$$
\begin{array}{l}
\ell_1=ab'c'\wedge abc;\\
\ell_2=a'bc'\wedge abc;\\
\ell_3=a'b'c\wedge abc.
\end{array}
$$
  Our condition for a triangle $abc$ is (cf.~\ref{ex:gc} for $n = 3$)
$$
\ell_1\wedge\ell_2\wedge \ell_3=\hbox{true}.
$$
This is to say that $b'c'a$, $c'a'b$, $a'b'c$, and $abc$ indeed meet in a
point.
\end{proof}

\begin{remark}
As one can notice, one can apply proofs~1 and~2 of Proposition~\ref{last} to any other triangle in the octahedron.
In fact all these conditions would be equivalent.
\end{remark}

\begin{example}
  \label{ex:antiprism}
  Let us consider a graph with the combinatorics of a four\dash sided
  antiprism (Figure~\ref{fig:antiprism} left).
  To determine whether a framework with such combinatorics is a tensegrity
  involves checking cycles with three and four vertices. Cycles
  with three vertices have been considered also in the previous example
  (Proposition~\ref{last}). The configuration for the cycles with four
  vertices is written down explicitly in Example~\ref{ex:gc}.
  An illustration of a tensegrity with the combinatorics of a four\dash
  sided antiprism is depicted in Figure~\ref{fig:antiprism} (right).
  The rods (= edges with positive weights) are depicted in
  red, the cables (= edges with negative weights) are depicted in blue.
\end{example}

\begin{example}
Let us decompose a two-dimensional domain $D$ homeomorphic to a disk into $k$ cells
that are either triangles or quadrilateral.
Let $G$ be the graph corresponding to the 1-skeleton of this decomposition.
Then
$$
 H_1(G)=\Z_k.
$$
Finally we consider frameworks in $\R^3$ representing $G$ (in 3D-general flat position).
One can pick all triangular and quadrilateral cycles of $G$ corresponding to all triangles
and quadrilaterals in the decomposition of $D$.
All the conditions for these cycles will be of two types described in
Example~\ref{ex:gc}. (Here one should substitute suitable vertices of the cycles to the corresponding expressions for $r_i$ and $e_i$.
Recall that the vertices $r_i$ are the corresponding vertices of the
  graph, and the vertices $e_i$ are defined from Definition~\ref{def:framed}.)
 \end{example}

\begin{example}
 Let us consider the example of the quadrilateral graph $G(m,n)$ on the
  torus with sides $m$ and $n$ (cf.\ Figure~\ref{fig:torus}).
 We have:
 $$
  H_1(G(m,n))=\Z_{mn+1}.
 $$
 One can pick $mn-1$ quadrilateral cycles, one longitude cycle, and one latitude cycle.
 All conditions of the quadrilateral cycles will be of the second type  described in
Example~\ref{ex:gc}.
The conditions for longitude and latitude cycles will be similar to the ones described in Example~\ref{ex:gc} but longer
(they are constructed by the iterations of Definition~\ref{GC-condition-cycle-def}).
\end{example}

\begin{figure}
  \begin{overpic}[width=.24\textwidth]{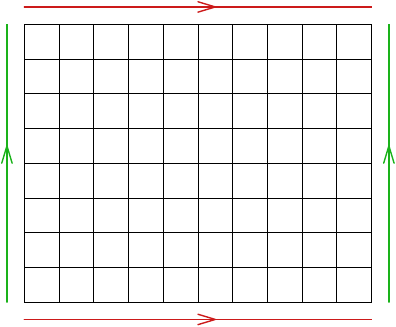}
  \end{overpic}
  \hfill
  \begin{overpic}[width=.24\textwidth]{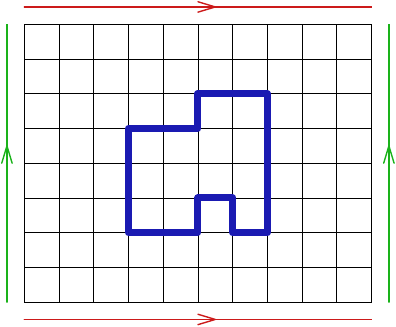}
  \end{overpic}
  \hfill
  \begin{overpic}[width=.24\textwidth]{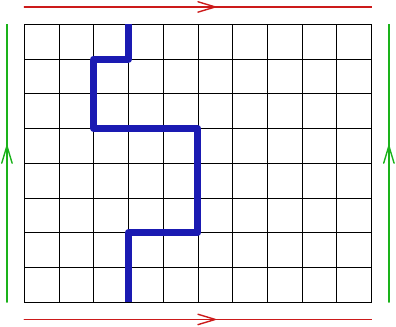}
  \end{overpic}
  \hfill
  \begin{overpic}[width=.24\textwidth]{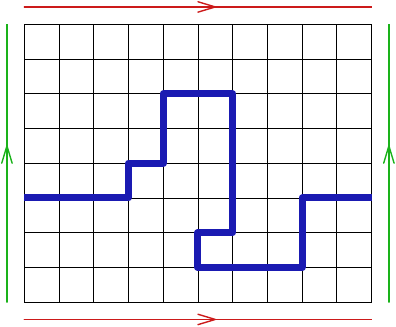}
  \end{overpic}
  \caption{Illustration of a torus.
  \emph{Left}: A torus is homeomorphic to a rectangle with opposite sides
  identified as illustrated. We are considering a framework with the
  combinatorics of the vertices of a rectangular sub-patch of the $\Z^2$
  lattice, with edges connecting neighbouring vertices, and with opposite
  vertices of the rectangle glued together like a torus.
  \emph{Second to third}: Illustrations of the three different types of
  cycles on that torus.
  }
  \label{fig:torus}
\end{figure}

\begin{remark}
In the examples of Figure~\ref{fig:examples}:
for the left one it is enough to write triangular, quadrilateral,
  pentagonal, and hexagonal conditions;
the right picture is very similar to the torus. Here one can pick mostly
quadrilateral cycles, one longitude cycle  and one latitude cycle.
More generally, this technique is applicable for all  graphs with linearly generic frameworks discussed in this paper (see Definition~\ref{defn:lingen}).
\end{remark}

\section*{Acknowledgments}

  Oleg Karpenkov is partially supported by the Engineering and Physical
  Sciences Research Council (EPSRC) grant EP/N014499/1 (LCMH).
  Christian M{\"u}ller gratefully acknowledges the support of the
  Austrian Science Fund (FWF) through project P~29981 and the LMS 2 grant
  scheme (Ref. No: 21810).
  Furthermore, we would like to thank the anonymous reviewers for their
  valuable comments.

\bibliographystyle{plain}
\bibliography{tensegrities-final-nonjournalstyle.bib}

\def\Yu{Yu}
\begin{thebibliography}{10}

\bibitem{Aloui2018}
Omar Aloui, David Orden, and Landolf Rhode-Barbarigos.
\newblock Generation of planar tensegrity structures through cellular
  multiplication.
\newblock {\em Appl. Math. Model.}, 64:71--92, 2018.

\bibitem{bobenko+2008}
Alexander~I. Bobenko and {\Yu}ri~B. Suris.
\newblock {\em Discrete differential geometry. Integrable structure}, volume~98
  of {\em Graduate Studies in Mathematics}.
\newblock American Mathematical Society, 2008.

\bibitem{botsch+2010}
Mario Botsch, Leif Kobbelt, Mark Pauly, Pierre Alliez, and Bruno Levy.
\newblock {\em Polygon Mesh Processing}.
\newblock AK Peters, 2010.

\bibitem{Caspar2013}
Donald~L. Caspar and Aaron Klug.
\newblock Physical principles in the construction of regular viruses.
\newblock {\em Cold Spring Harbor symposia on quantitative biology}, 27:1--24,
  02 1962.

\bibitem{Connelly2013}
Robert Connelly.
\newblock Tensegrities and global rigidity.
\newblock In {\em Shaping space}, pages 267--278. Springer, New York, 2013.

\bibitem{Connelly1996}
Robert Connelly and Walter Whiteley.
\newblock Second-order rigidity and prestress stability for tensegrity
  frameworks.
\newblock {\em SIAM J. Discrete Math.}, 9(3):453--491, 1996.

\bibitem{Guzman2006}
Miguel de~Guzm\'{a}n and David Orden.
\newblock From graphs to tensegrity structures: geometric and symbolic
  approaches.
\newblock {\em Publ. Mat.}, 50(2):279--299, 2006.

\bibitem{Doray2010}
Franck Doray, Oleg Karpenkov, and Jan Schepers.
\newblock Geometry of configuration spaces of tensegrities.
\newblock {\em Discrete Comput. Geom.}, 43(2):436--466, 2010.

\bibitem{Doubilet1974}
Peter Doubilet, Gian-Carlo Rota, and Joel Stein.
\newblock On the foundations of combinatorial theory. {IX}. {C}ombinatorial
  methods in invariant theory.
\newblock {\em Studies in Appl. Math.}, 53:185--216, 1974.

\bibitem{Ingber2014}
Donald~E. Ingber, Ning Wang, and Dimitrije Stamenovi\'{c}.
\newblock Tensegrity, cellular biophysics, and the mechanics of living systems.
\newblock {\em Rep. Progr. Phys.}, 77(4):046603, 21, 2014.

\bibitem{Jackson2015a}
Bill Jackson and Anthony Nixon.
\newblock Stress matrices and global rigidity of frameworks on surfaces.
\newblock {\em Discrete Comput. Geom.}, 54(3):586--609, 2015.

\bibitem{Karpenkov2018}
Oleg Karpenkov.
\newblock Open problems on configuration spaces of tensegrities.
\newblock {\em Arnold Math. J.}, 4(1):19--25, 2018.

\bibitem{Karpenkov2019}
Oleg Karpenkov.
\newblock Geometric conditions of rigidity in nongeneric settings.
\newblock In {\em Handbook of geometric constraint systems principles},
  Discrete Math. Appl. (Boca Raton), pages 317--339. CRC Press, Boca Raton, FL,
  2019.

\bibitem{Karpenkov2013}
Oleg Karpenkov, Jan Schepers, and Brigitte Servatius.
\newblock On stratifications for planar tensegrities with a small number of
  vertices.
\newblock {\em Ars Math. Contemp.}, 6(2):305--322, 2013.

\bibitem{Kitson2014}
Derek Kitson and Stephen~C. Power.
\newblock Infinitesimal rigidity for non-{E}uclidean bar-joint frameworks.
\newblock {\em Bull. Lond. Math. Soc.}, 46(4):685--697, 2014.

\bibitem{Kitson2015}
Derek Kitson and Bernd Schulze.
\newblock Maxwell-{L}aman counts for bar-joint frameworks in normed spaces.
\newblock {\em Linear Algebra Appl.}, 481:313--329, 2015.

\bibitem{Li2008}
Hongbo Li.
\newblock {\em Invariant algebras and geometric reasoning}.
\newblock World Scientific Publishing Co. Pte. Ltd., Hackensack, NJ, 2008.
\newblock With a foreword by David Hestenes.

\bibitem{Maxwell1864}
James~Clerk Maxwell.
\newblock On reciprocal figures and diagrams of forces.
\newblock {\em Philos. Mag.}, 4(27):250--261, 1864.

\bibitem{richter-gebert-2011}
J\"{u}rgen Richter-Gebert.
\newblock {\em Perspectives on projective geometry}.
\newblock Springer, Heidelberg, 2011.

\bibitem{Roth1981}
Ben Roth and Walter Whiteley.
\newblock Tensegrity frameworks.
\newblock {\em Trans. Amer. Math. Soc.}, 265(2):419--446, 1981.

\bibitem{Saliola2007}
Franco~V. Saliola and Walter Whiteley.
\newblock Some notes on the equivalence of first-order rigidity in various
  geometries.
\newblock {\em arXiv:0709.3354 [math.MG]}, pages 1--15, 2007.

\bibitem{Simona-Mariana2011}
Cretu Simona-Mariana and Brinzan Gabriela-Catalina.
\newblock Tensegrity applied to modelling the motion of viruses.
\newblock {\em Acta Mech. Sin.}, 27(1):125--129, 2011.

\bibitem{Sitharam2019}
Meera Sitharam, Audrey St.~John, and Jessica Sidman, editors.
\newblock {\em Handbook of geometric constraint systems principles}.
\newblock Discrete Mathematics and its Applications (Boca Raton). CRC Press,
  Boca Raton, FL, 2019.

\bibitem{Skelton1997}
Robert~E. Skelton.
\newblock Deployable tendon-controlled structure, 1997.
\newblock United States Patent 5642590.

\bibitem{sturmfels-1993}
Bernd Sturmfels.
\newblock {\em Algorithms in invariant theory}.
\newblock Texts and Monographs in Symbolic Computation. Springer-Verlag,
  Vienna, 1993.

\bibitem{wardetzky+2007}
Max Wardetzky, Saurabh Mathur, Felix Kaelberer, and Eitan Grinspun.
\newblock {Discrete Laplace operators: No free lunch}.
\newblock In Alexander Belyaev and Michael Garland, editors, {\em Geometry
  Processing}. The Eurographics Association, 2007.

\bibitem{White1975}
Neil~L. White.
\newblock The bracket ring of a combinatorial geometry. {I}.
\newblock {\em Trans. Amer. Math. Soc.}, 202:79--95, 1975.

\bibitem{white-1995}
Neil~L. White.
\newblock A tutorial on {G}rassmann-{C}ayley algebra.
\newblock In {\em Invariant methods in discrete and computational geometry
  ({C}ura\c{c}ao, 1994)}, pages 93--106. Kluwer Acad. Publ., Dordrecht, 1995.

\bibitem{White1983}
Neil~L. White and Walter Whiteley.
\newblock The algebraic geometry of stresses in frameworks.
\newblock {\em SIAM J. Algebraic Discrete Methods}, 4(4):481--511, 1983.

\bibitem{White1987}
Neil~L. White and Walter Whiteley.
\newblock The algebraic geometry of motions of bar-and-body frameworks.
\newblock {\em SIAM J. Algebraic Discrete Methods}, 8(1):1--32, 1987.

\bibitem{Whiteley1997}
Walter Whiteley.
\newblock Rigidity and scene analysis.
\newblock In {\em Handbook of discrete and computational geometry}, CRC Press
  Ser. Discrete Math. Appl., pages 893--916. CRC, Boca Raton, FL, 1997.

\end{thebibliography}

\end{document}